\documentclass[10pt,a4paper]{amsart}
\usepackage[latin1]{inputenc}
\usepackage{amsmath,amsfonts,amssymb,amsthm,bookmark,appendix,graphicx}
\usepackage{mathrsfs} 
\usepackage{dsfont} 
\usepackage{tikz}
\usepackage{caption}
\usepackage{subcaption}

\setlength{\voffset}{-.75truein}
\setlength{\textheight}{9.25truein}
\setlength{\textwidth}{6.5truein}
\setlength{\hoffset}{-.5truein}

\newcommand\copyrighttext{%
  \footnotesize 
  This is an author-created, un-copyedited version of an article accepted for publication in ESAIM: COCV. The publisher is not responsible for any errors or omissions in this version of the manuscript or any version derived from it.}
\newcommand\copyrightnotice{%
\begin{tikzpicture}[remember picture,overlay]
\node[anchor=south,yshift=740pt] at (current page.south) {\fbox{\parbox{\dimexpr\textwidth-\fboxsep-\fboxrule\relax}{\copyrighttext}}};
\end{tikzpicture}%
}


\theoremstyle{plain}
\newtheorem{theorem}{Theorem}
\newtheorem{lemma}[theorem]{Lemma}

\theoremstyle{remark}
\newtheorem*{definition}{Definition}
\newtheorem*{remark}{Remark}
\newtheorem{example}[theorem]{Example}




\def\bq{\begin{eqnarray}}
\def\eq{\end{eqnarray}}
\def\bqq{\begin{eqnarray*}}
\def\eqq{\end{eqnarray*}}
\def\nn{\nonumber}
\def\minus {\backslash}
\def\eps{\varepsilon}

\def\fall{~\text{for all}~}


\def\d {\partial}
\def\E {\mathscr{E}}

\def\real {\mathds{R}}
\def\N {\mathds{N}}
\def\diss {\mathscr{D}iss}

\def\nn {\nonumber}

\def\N0 {\mathop N\limits^{\circ}}

\def\fall{~\text{for all}~}
\def\diss {\mathscr{D}iss}
\def\Diss {\mathscr{D}iss}

\title[BV solutions via epsilon-neighborhood method]{BV solutions constructed using the epsilon-neighborhood method}
\author[Mach Nguyet Minh]{}
\subjclass{Primary: 49M99; Secondary: 49J20.}
 \keywords{Rate-independent systems, BV solutions, local minimizers, energy-dissipation balance.}

 \email{machnt@mathematik.uni-stuttgart.de}


\begin{document}

\copyrightnotice

\date{{June 20, 2012}}
\maketitle

\centerline{\scshape Mach Nguyet Minh}
\medskip
{\footnotesize
 \centerline{Department of Mathematics}
   \centerline{University of Stuttgart}
   \centerline{ Allmandring 5b, Stuttgart, 70569, Germany}
} 

\begin{abstract} We study a certain class of weak solutions to rate-independent systems, which is constructed by using the local minimality in a small neighborhood of order $\eps$ and then taking the limit $\eps\to 0$. We show that the resulting solution satisfies both the weak local stability and the new energy-dissipation balance, similarly to the BV solutions constructed by vanishing viscosity introduced recently by Mielke, Rossi and Savar\'e \cite{MieRosSav-10,MieRosSav-12,MieRosSav-13}.
\end{abstract}

\section{Introduction}
A rate-independent system is a specific case of quasi-static systems. It is time-dependent but its behavior is slow enough so that the inertial effects can be ignored and the systems are only affected by external loadings. Some specific  rate-independent systems have been studied by many authors, including Francfort, Marigo, Larsen, Dal Maso and Lazzaroni on brittle fractures \cite{FraMar-98, FraLar-03, Larsen-10, DalLaz-10}, Dal Maso, DeSimone and Solombrino on the Cam-Clay model \cite{DalDesSol-08}, Dal Maso, DeSimone, Mora, Morini on plasticity with softening \cite{DalSesMorMor-08a, DalDesMorMor-08b}, Mielke on elasto-plasticity \cite{Mielke-02, Mielke-03}, Mielke, Theil and Levitas on shape-memory alloys \cite{MieThe-99, MieThe-04, MiThLe-02}, M\"{u}ller, Schmid and Mielke on super-conductivity \cite{Muller-99, SchMie-05}, and Alberti and DeSimone on capillary drops \cite{AlbDeS-11}. We refer to the surveys \cite{Mielke-06, Mielke-05, Mielke-07, Mielke-08} by Mielke for the study in abstract setting as well as for further references.

In this work, we consider a finite-dimensional normed vector space $X$, an evolution $u: [0,T]\to X$ subject to a force defined by an energy functional $\E: [0,T] \times X \to [0,+\infty)$ which is of class $C^1$, and a dissipation function $\Psi(x)$ which is convex, non-degenerate and positively $1$-homogeneous. Given an initial position $x_0\in X$ which is a local minimizer for the functional $x\mapsto \E(0,x)+\Psi(x-x_0)$, we say that $u$ is a solution to the rate-independent system $(\E,\Psi,x_0)$ if $u(0)=x_0$ and the following inclusion holds true,
\bq \label{eqn:Evol-eqn1a}
0 \in \partial \Psi(\dot{u}(t)) + D_x \E(t,u(t)) \text{ in } X^*,  \text{ for a.e. } t \in (0,T),
\eq
where $X^*$ denotes the dual space of $X$, $\partial \Psi$ is the subdifferential of $\Psi$ and $D_x\E$ is the differential of $\E$ w.r.t. the spatial variable $x$.

In general, strong solutions to (\ref{eqn:Evol-eqn1a}) may not exist \cite{Stefanelli-09}. Hence, the question on defining some {\it weak solutions} arises naturally. 

A widely-used weak solution is the {\it energetic solution}, which was first introduced by Mielke and Theil \cite{MieThe-99} (see \cite{MieThe-04, MaiMie-05, FraMie-06, Mielke-06} for further studies). A function $u: [0,T] \to X$ is called an energetic solution to the rate-independent system $(\E,\Psi,x_0)$ if it satisfies: 
\begin{itemize}
\item [(i)] the initial condition $u(0)=x_0$;

\item[(ii)] the \textit{global stability} that for  $(t,x) \in [0,T]\times X$,
\bq \label{eq:global-stability}
\E(t,u(t)) \le \E(t,x)+\Psi(x-u(t));
\eq

\item[(iii)] the \textit{energy-dissipation balance} that for all $0 \le t_1 \le t_2 \le T$,
\bq \label{eq:energy-dissipation-old}
\E(t_2,u(t_2))-\E(t_1,u(t_1))=\int_{t_1}^{t_2} {\partial_t \E(s,u(s))\,ds} - \diss_{\Psi}( u;[t_1,t_2]) .
\eq
\end{itemize}

Here, $\diss_{\Psi}$ is the usual total variation induced by $\Psi(\cdot)$   
\bqq
\diss_{\Psi}( u(t);[t_1,t_2]):=\sup\left\{ \sum_{i=1}^N \Psi(u(s_{i})-u(s_{i-1})) ~\huge{|}~N \in \mathbb{N}, \; t_1= s_0<s_1<\dots<s_N= t_2 \right\}.
\eqq

Note that when the energy functional is not convex, the global minimality (\ref{eq:global-stability}) makes the energetic solutions jump sooner than they should, and hence fails to describe the related physical phenomena (see Examples \ref{example2} below). Therefore, some weak solutions based on {\it local minimality} are of interest. 

Recently, an elegant weak solution based on vanishing viscosity method has been introduced by Mielke, Rossi and Savar\'e \cite{MieRosSav-10, MieRosSav-12, MieRosSav-13}. Their idea is to add a small viscosity term to the dissipation functional $\Psi$. This results in a new dissipation functional $\Psi_{\eps}$, e.g. $\Psi_{\eps}(x) = \Psi(x) + \frac{\eps}{2}\|x\|^2$, which has super-linear growth at infinity and converges to $\Psi$ in an appropriate sense as $\eps$ tends to zero. They showed that the modified system $(\E, \Psi_{\eps},x_0)$ admits a solution $u_\eps$. The limit $u$ of a subsequence $u_\eps$ as $\eps\to 0$, called {\it BV solution}, has the following properties:
\begin{itemize}

\item[(i)] the initial condition $u(0)=x_0$;

\item[(ii)] the {\it weak local stability} that for all $t\in [0,T]\minus J$, 
\bq \label{eq:weak-local-stability}
-D_x \E(t,u(t)) \in  \partial \Psi(0);
\eq

\item[(iii)] the {\it new energy-dissipation balance} that for all $0 \le t_1 \le t_2 \le T$,
\bq \label{eq:energy-dissipation-new}
\E(t_2,u(t_2))-\E(t_1,u(t_1))=\int_{t_1}^{t_2} {\partial_t \E(s,u(s))\,ds} - \diss_{new}(u;[t_1,t_2]) .
\eq
 
\end{itemize}
Here, $J$ is the {\it jump set} of $u$ on $[0,T]$
\[J:=\{t\in [0,T]~|~u(\cdot) {\rm~ is~ not~ continuous ~at~}t\},\]
$\partial \Psi(0)$ is the subdifferential of $\Psi$ at $0$,  $\left<\cdot,\cdot\right>$ is the dual pairing between $X^*$ and $X$
\[\partial \Psi(0):= \{ \eta \in X^*\;|\; \langle \eta,v \rangle \le \Psi(v) \; \forall v \in X\},\]
and the {\it new dissipation} is defined by
\bqq
\diss_{ new}(u;[t_1,t_2])&:=&\diss_{\Psi}(u;[t_1,t_2])+\sum_{t \in J \cap (t_1,t_2)} \left(  \Delta_{new}(t;u(t^-),u(t))+\Delta_{new}(t;u(t),u(t^+))  \right) \hfill\\
&& + \Delta_{new}(t_1;u(t_1),u(t_1^+)) + \Delta_{new}(t_2;u(t_2^-),u(t_2))\hfill\\
&& - \sum_{t \in J \cap (t_1,t_2)} \left( \Psi(u(t)-u(t^-))+\Psi(u(t^+)-u(t))\right) - \Psi(u(t_1^+)-u(t_1)) -\Psi(u(t_2)-u(t_2^-)),
\eqq
where $\Delta_{new}(t;a,b)$ also depends on the energy functional $\E$, the dissipation $\Psi$ and the viscous norm $\|\cdot\|$
\bqq
\Delta_{new}(t;a,b)=\inf \left\{\int_0^1 \left(\Psi(\dot{\gamma}(s))+\|\dot{\gamma}(s)\|\cdot \inf_{z \in \partial \Psi(0)}\|D_x \E(t,\gamma(s))+z\|_{*}\right)ds\;|\; \gamma \in AC([0,1];X), \gamma(0)=a, \gamma(1)=b \right\}.
\eqq
Here, the dual norm of $\|\cdot\|$ is defined by $\|\eta\|_*:=\sup_{v \in X\minus \{0\}}\frac{| \left<\eta,v\right> |}{\|v\|}$ for all $\eta\in X^*$.

The new energy-dissipation balance is a deep insight observation, which contains the information at the jump points. Indeed, it has been shown in \cite{MieRosSav-12, MieRosSav-13} that if the BV solution $u$ jumps at time $t$, there exists an absolutely continuous path $\gamma:[0,1] \to X$, which called an {\it optimal transition} between $u(t^-)$ and $u(t^+)$, such that
\begin{itemize}

\item[(i)] $\gamma(0)=u(t^-)$, $\gamma(1)=u(t^+)$, and there exists $s\in[0,1]$ such that $\gamma(s)=u(t)$;

\item[(ii)] for all $s \in [0,1], -D_x \E(t,\gamma(s))$ stays outside the set $\partial \Psi(0)$ (if $\gamma$ is of viscous type), or on the boundary of $\partial \Psi(0)$ (if $\gamma$ is of sliding type);

\item[(iii)] $\E(t,u(t^{-}))-\E(t,u(t^{+}))=\int_0^1 \left( \Psi(\dot{\gamma}(s)) +\|\dot{\gamma}(s)\|\cdot \inf_{z \in \partial \Psi(0)} \| D_x\E(t,\gamma(s)) +z\|_{*}\right)\,ds$.
\end{itemize}


As we can see from the definition, BV solutions constructed using the vanishing viscosity method also depend on viscosity. Usually, viscosity arises naturally from physical models.

To deal with local minimizers but with a totally different approach, Larsen \cite{Larsen-10} proposed the $\eps$-stability solution in the context of fracture mechanics. The idea is to choose minimizers among all $\eps$-accessible states w.r.t. the discretized solution at previous time-step. A state $v$ is called $\eps$-accessible w.r.t. state $z$ if the total energy at $v$ is lower than the total energy at $z$; and there is a continuous path connecting $z$ to $v$ such that total energy never increases by more than $\eps$ along this path. In this way, the limit $u(t)$, when passing from discrete to continuous time, satisfies the $\eps$-stability: $u(t)$ is $\eps$-stable at every time $t$, i.e. there is no $\eps$-accessible state w.r.t. $u(t)$. A similar version of {\it optimal transition} is obtained at jump points: if the solution jumps at time $t$, there exists a continuous path connecting $u(t^-)$ to $u(t^+)$ such that total energy increases no more than $\eps$ along this path. The energy-dissipation upper bound is proved for fixed $\eps>0$. The energy-dissipation equality is obtained if the solution has only jumps of sizes less than $\eps$.


In this work, we shall discuss one more way to deal with local minimizers. The idea is similar to the viscosity method of Mielke-Rossi-Savar\'e in \cite{MieRosSav-10, MieRosSav-12, MieRosSav-13}, but instead of adding a small viscosity into the dissipation, we consider the minimization problem (\ref{eq:global-stability}) in a small neighborhood of order $\eps$. Passing from discrete to continuous time, we obtain a limit $x^{\eps}(\cdot)$. Then, by taking $\eps\to 0$, we get a solution $u(\cdot)$. The epsilon-neighborhood approach was first suggested in \cite[Section 6]{Mielke-03} for the one-dimensional case when $\eps$ is chosen proportionally to the square root of the time-step and the weak local stability was then obtained in \cite{EfeMie-06}. 


Roughly speaking, the epsilon-neighborhood method is a special case of the vanishing viscosity approach when the viscosity term is chosen as follows
\[ \Psi_0(v):= \begin{cases}0 & \text{ if } |v| \le 1, \\ +\infty & \text{ if } |v|>1. \end{cases} \]
However, this method was not discussed in \cite{MieRosSav-12, MieRosSav-13} since the viscosity there is required to be finite (see \cite[Section 2.3]{MieRosSav-12} and \cite[Section 2.1]{MieRosSav-13} for further discussions).

In this article, we shall show that the BV solution constructed using the epsilon-neighborhood method $u(\cdot)$ satisfies both the weak local stability and the new energy-dissipation balance, i.e. it satisfies the definition of BV solutions introduced by Mielke, Rossi and Savar\'e \cite{MieRosSav-10, MieRosSav-12,MieRosSav-13}. Similar to BV solutions constructed by vanishing viscosity, BV solutions constructed using the epsilon-neighborhood method also depend on the norm that defines the ``neighborhood''. In Example 1.2 below, we shall make a comparison between different notions of weak solutions, i.e. energetic solutions, BV solutions constructed by vanishing viscosity, BV solutions constructed by the epsilon-neighborhood method as well as the solutions constructed by the method in \cite{EfeMie-06}. For a detailed discussion on the different notions of weak solutions, we refer to the papers \cite{Mielke-08, Negri-10, RoSa-12, Minh-12}.

\text{}\\
{\bf Acknowledgments.} I am indebted to Professor Giovanni Alberti for proposing to me the problem and giving many helpful discussions. I warmly thank Professor Riccarda Rossi, Li-Chang Hung and Tran Minh-Binh for their helpful comments and remarks. I really appreciate the three referees for many enlightening and insightful remarks and helpful suggestions and corrections. This work has been partially supported by the PRIN 2008 grant ``Optimal mass transportation, Geometric and Functional Inequalities and Applications" and the FP7-REGPOT-2009-1 project ``Archimedes Center for Modeling, Analysis and Computation".

\section{Main results}
For simplicity, we shall consider the case when $X=\real^d$ and the unit ball of the norm $\|\cdot\|$ which defines the neighborhood has $C^1$-boundary. In addition, we assume that the energy functional $\E(t,x):[0,T]\times \real^d \to [0,\infty)$ satisfies the following technical assumption: there exists $\lambda = \lambda(\E)$ such that
\bq \label{eq:E1}
\left| \partial_t \E (s,x) \right| \le \lambda \,\E(s,x)  \text{ for all } (s,x) \in [0,T] \times \real^d. 
\eq
\begin{remark} The condition (\ref{eq:E1}) was proposed in \cite{Mielke-08}. Together with Gronwall's inequality, (\ref{eq:E1}) implies that
\begin{eqnarray} \label{E2}
\E(r,x) \le \E(s,x)\, e^{\lambda|r-s|}, ~ \left| \partial_t \E(r,x) \right| \le \lambda \,\E(s,x)\, e^{\lambda|r-s|}
\end{eqnarray}
for any $r, s$ in $[0,T]$.
\end{remark}

\begin{definition}[Construction of discretized solutions] Let $\eps>0$, $\tau>0$ and let $N\in \mathbb{N}$ satisfy $T\in [\tau N, \tau (N+1))$. We define a sequence $\{x^{\eps,\tau}\}_{i=0}^N$ by $x^{\eps,\tau}_0=x_0$ (initial position) and 
\bq\label{eq:dis-prob}
x_i^{\eps,\tau}\in {\rm argmin} \{\E(t_i,x)+\Psi(x-x_{i-1}^{\eps,\tau})\; | \; \|x-x^{\eps,\tau}_{i-1}\| \le \eps\}~~{\rm for ~every}~ i \in \{1, \dots, N\}.
\eq
The discretized solution $x^{\eps,\tau}(\cdot)$ is then constructed by interpolation  
$$x^{\eps,\tau} (t):= x^{\eps,\tau} _{i-1}~{\rm for ~every}~t \in [t_{i-1},t_i),i \in \{1, \dots, N\}.$$
\end{definition}

Our main result is as follows. 

\begin{theorem}[BV solutions constructed using the epsilon-neighborhood method]\label{thm:eps-neigh-sol} Let $\E: [0,T] \times \real^d \to [0,+\infty)$ be of class $C^1$ and satisfy (\ref{eq:E1}). The dissipation functional $\Psi: \mathbb{R}^d \to [0,\infty)$ is assumed to be convex, positively $1$-homogeneous and satisfy $\Psi(v)>0$ for all $v \in \mathbb{R}^d\minus\{0\}$. Given an initial datum $x_0\in \real^d$ which is a local minimizer of the functional $x\mapsto \E(0,x)+\Psi(x-x_0)$. Then, we have the following properties:

\begin{itemize}

\item[(i)] (Discretized solution) For any $\eps>0$ and $\tau>0$, there exists a discretized solution $t \mapsto x^{\eps,\tau}(\cdot)$ as described above.

\item[(ii)] (Epsilon-neighborhood solution) For any fixed $\eps>0$, there exists a subsequence $\tau_n \to 0$ such that $x^{\eps,\tau_n}(\cdot)$ converges pointwise to some limit $x^{\eps}(\cdot)$. Moreover,

\item (Epsilon local stability) If $x^{\eps}(\cdot)$ is  
right-continuous at $t$, namely $\lim_{t'\to t^+} x^{\eps}(t')=x^{\eps}(t)$, then $x^{\eps}(t)$ satisfies the epsilon local stability
\begin{equation*}
 \E(t,x^{\eps}(t)) \le  \E(t,x)+\Psi(x-x^{\eps}(t))~~{\rm for~all}~\|x-x^{\eps}(t)\|\le \eps;
 \end{equation*}

\item (Energy-dissipation inequalities) We have $\diss_{\Psi} (x^{\eps};[0,T])\le C$ (independent of $\eps$), $\d_t \E(\cdot,x ^{\eps}(\cdot))\in L^1(0,T)$ and for all $ 0\le s \le t \le T$,
$$  -\diss_{new}(x^{\eps};[s,t]) \le \E(t,x^{\eps}(t))-\E (s,x^{\eps}(s)) - \int_{s}^t \d_t \E(r, x^{\eps}(r))\,dr\le  -\diss_{\Psi}(x^{\eps};[s,t]).$$

\item[(iii)] (BV solutions constructed by epsilon-neighborhood) There exists a subsequence $\eps_n\to 0$ such that $x^{\eps_n}$ converges pointwise to some BV function $u$. Furthermore, the function $u$ satisfies: 

\item (Weak local stability) If $t \mapsto u(t)$ is continuous at $t$, then
\[ -\nabla_x \E(t,u(t)) \in \partial \Psi(0);\]

\item (New energy-dissipation balance) For all $0\le s\le t\le T$, one has
$$  \E(t,u(t))-\E (s,u(s))= \int_{s}^t \d_t \E(r, u(r))\,dr-\diss_{new}(u;[s,t]).$$
\end{itemize}

\end{theorem}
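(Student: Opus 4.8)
The plan is to prove the three parts in sequence, establishing every estimate first at the discrete level and then passing to the two successive limits $\tau\to0$ and $\eps\to0$. Part (i) is Weierstrass: the map $x\mapsto\E(t_i,x)+\Psi(x-x_{i-1}^{\eps,\tau})$ is continuous (as $\E$ is $C^1$ and $\Psi$ convex) and the admissible set $\{\|x-x_{i-1}^{\eps,\tau}\|\le\eps\}$ is compact, so a minimizer exists and $\{x_i^{\eps,\tau}\}$ is defined inductively. Testing the minimality (\ref{eq:dis-prob}) with the admissible competitor $x=x_{i-1}^{\eps,\tau}$ and using $\Psi(0)=0$ yields the discrete energy inequality $\E(t_i,x_i^{\eps,\tau})+\Psi(x_i^{\eps,\tau}-x_{i-1}^{\eps,\tau})\le\E(t_i,x_{i-1}^{\eps,\tau})$. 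Writing the right-hand side as $\E(t_{i-1},x_{i-1}^{\eps,\tau})+\int_{t_{i-1}}^{t_i}\d_t\E(s,x_{i-1}^{\eps,\tau})\,ds$, summing in $i$, and invoking (\ref{eq:E1})--(\ref{E2}) with Gronwall, I obtain $\sup_i\E(t_i,x_i^{\eps,\tau})\le C$ and $\sum_i\Psi(x_i^{\eps,\tau}-x_{i-1}^{\eps,\tau})\le C$ with $C$ independent of $\eps,\tau$. For fixed $\eps$, Helly's selection theorem extracts $\tau_n\to0$ with $x^{\eps,\tau_n}\to x^{\eps}$ pointwise, which is part (ii).

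For the stability and the two bounds of (ii), note that at a right-continuity point $t$ both $x_{i-1}^{\eps,\tau_n}$ and $x_i^{\eps,\tau_n}$ (for the index with $t\in[t_{i-1},t_i)$) converge to $x^{\eps}(t)$, so any $x$ with $\|x-x^{\eps}(t)\|<\eps$ is eventually admissible; passing to the limit in $\E(t_i,x_i^{\eps,\tau_n})+\Psi(x_i^{\eps,\tau_n}-x_{i-1}^{\eps,\tau_n})\le\E(t_i,x)+\Psi(x-x_{i-1}^{\eps,\tau_n})$ gives the $\eps$-local stability, first on the open and then, by continuity, on the closed ball. The upper estimate $\E(t,x^{\eps}(t))-\E(s,x^{\eps}(s))-\int_{s}^{t}\d_t\E(r,x^{\eps}(r))\,dr\le-\diss_{\Psi}(x^{\eps};[s,t])$ follows by summing the discrete energy inequality over $[s,t]$, passing to the limit, and using lower semicontinuity of $\diss_{\Psi}$. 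The lower estimate is, by contrast, \emph{soft} and function-independent: from $z\in\partial\Psi(0)\Rightarrow\langle z,w\rangle\le\Psi(w)$ one derives the Fenchel--Young inequality
\[
-\langle\nabla_x\E(t,y),w\rangle\le\Psi(w)+\|w\|\inf_{z\in\partial\Psi(0)}\|\nabla_x\E(t,y)+z\|_{*}\qquad(y,w\in\real^d),
\]
whose integration along any path shows $\Delta_{new}(t;a,b)\ge\E(t,a)-\E(t,b)$; summing over the jumps and the continuous part gives $\diss_{new}(x^{\eps};[s,t])\ge-\big(\E(t,x^{\eps}(t))-\E(s,x^{\eps}(s))-\int_{s}^{t}\d_t\E(r,x^{\eps}(r))\,dr\big)$.

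The engine for part (iii) is the behaviour of the minimizers (abbreviated $x_i=x_i^{\eps,\tau}$) on the sphere $\|x_i-x_{i-1}\|=\eps$, where jumps are resolved. Since the unit ball of $\|\cdot\|$ is $C^1$, the KKT conditions for (\ref{eq:dis-prob}) read $\nabla_x\E(t_i,x_i)+\eta_i+\ell_i\,\nabla\|x_i-x_{i-1}\|=0$ with $\eta_i\in\partial\Psi(x_i-x_{i-1})\subseteq\partial\Psi(0)$ and multiplier $\ell_i\ge0$; as $\nabla\|\cdot\|$ has dual norm $1$, this forces $\ell_i=\|\nabla_x\E(t_i,x_i)+\eta_i\|_{*}\ge\inf_{z\in\partial\Psi(0)}\|\nabla_x\E(t_i,x_i)+z\|_{*}$. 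Using Euler's identities $\langle\eta_i,x_i-x_{i-1}\rangle=\Psi(x_i-x_{i-1})$ and $\langle\nabla\|x_i-x_{i-1}\|,x_i-x_{i-1}\rangle=\|x_i-x_{i-1}\|$, a first-order expansion yields the per-step energy drop
\[
\E(t_i,x_{i-1})-\E(t_i,x_i)=\Psi(x_i-x_{i-1})+\ell_i\,\|x_i-x_{i-1}\|+o(\|x_i-x_{i-1}\|),
\]
so the excess of the drop over the $\Psi$-length equals the discrete viscous cost $\ell_i\|x_i-x_{i-1}\|$. Interpolating the boundary chain and letting $\tau_n\to0$ produces a transition $\gamma$ from $x^{\eps}(t^-)$ to $x^{\eps}(t^+)$ whose total cost is $\int_0^1(\Psi(\dot\gamma)+\|\dot\gamma\|\inf_{z}\|\nabla_x\E(t,\gamma)+z\|_{*})\,ds$ and equals the energy drop; that is, the Fenchel--Young bound is \emph{saturated} and $\gamma$ is a $\Delta_{new}$-optimal transition.

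Finally, for part (iii), the $\eps$-uniform bound $\diss_{\Psi}(x^{\eps};[0,T])\le C$ lets Helly extract $\eps_n\to0$ with $x^{\eps_n}\to u\in BV$ pointwise. For the weak local stability at a continuity point $t$ of $u$, I insert $x=x^{\eps}(t)+hv$ (admissible once $h\|v\|\le\eps$) into the $\eps$-local stability, divide by $h$ and send $h\to0$, using $\E\in C^1$, to reach $\langle-\nabla_x\E(t,x^{\eps}(t)),v\rangle\le\Psi(v)$ for all $v$; since each $x^{\eps}$ is right-continuous off a countable set, approximating $t$ by such times and using $x^{\eps_n}(t)\to u(t)$ with continuity of $\nabla_x\E$ yields $-\nabla_x\E(t,u(t))\in\partial\Psi(0)$. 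For the new energy-dissipation balance, the inequality $\ge-\diss_{new}(u;[s,t])$ is again the soft Fenchel--Young bound applied to $u$; the reverse is the crux, since passing the $\le-\diss_{\Psi}(x^{\eps})$ estimate to the limit only gives $\le-\diss_{\Psi}(u)$, and the extra jump cost $\diss_{new}(u)-\diss_{\Psi}(u)$ must be recovered from the energy lost in the transitions. I obtain it by showing that the saturated boundary chains of the previous step converge, jump by jump, to $\Delta_{new}$-optimal transitions for $u$, so that the energy drop across each jump equals its $\Delta_{new}$-cost; assembling the continuous part (governed by weak local stability) with these contributions gives $\E(t,u(t))-\E(s,u(s))-\int_{s}^{t}\d_t\E(r,u(r))\,dr=-\diss_{new}(u;[s,t])$. \textbf{The main obstacle} is precisely this double-limit identification: controlling the multipliers $\ell_i$ and the boundary chains uniformly as $\tau_n\to0$ and then $\eps_n\to0$, so that the viscous cost $\ell_i\|x_i-x_{i-1}\|$ converges to $\|\dot\gamma\|\inf_{z}\|\nabla_x\E+z\|_{*}$ with no energy lost in the limit and the limiting paths are admissible optimal transitions.
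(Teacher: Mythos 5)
Your parts (i) and (ii) follow the paper's own route (Weierstrass, testing with $x_{i-1}^{\eps,\tau}$, Gronwall, Helly, the two-sided convergence $x_{i-1}^{\eps,\tau_n},x_i^{\eps,\tau_n}\to x^{\eps}(t)$ at right-continuity points), and your per-step Lagrange-multiplier identity on the sphere $\|x_i-x_{i-1}\|=\eps$ is exactly the paper's Lemma \ref{le:discrete-solution}. The genuine gap is in how you finish part (iii). You propose to let the boundary chains and multipliers $\ell_i$ converge, as $\tau_n\to0$ and then $\eps_n\to 0$, to an actual limiting curve $\gamma$ that is a $\Delta_{new}$-optimal transition, and you yourself flag this ``double-limit identification'' as the main unresolved obstacle. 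It is indeed unresolved in your proposal: you give no compactness for the reparametrized chains, no reason the jump locations of $x^{\eps}$ match those of $u$, and no argument that the viscous costs $\ell_i\|x_i-x_{i-1}\|$ pass to the limit without loss. But this entire construction is unnecessary, and avoiding it is the paper's key observation (Lemma \ref{le:upper-bound}): since $\Delta_{new}(t;a,b)$ is an \emph{infimum} over admissible paths, one never needs a limit path, only an upper bound on the infimum at each fixed $(\tau,\eps,\delta)$. Concretely, one takes the piecewise-linear interpolation of $u(t^-),\,x^{\eps,\tau}(t-\delta),\,x^{\eps,\tau}(t_{i+1}),\dots,x^{\eps,\tau}(t)=x^{\eps,\tau}(t_{i+k}),\,u(t)$ as a competitor; the middle segments are bounded by telescoping energy drops via your per-step identity (with the uniform error $g(\delta)$), the two end segments are bounded by $C\|u(t^-)-x^{\eps,\tau}(t-\delta)\|$ and $C\|x^{\eps,\tau}(t)-u(t)\|$ because $\inf_{z\in\partial\Psi(0)}\|\nabla_x\E+z\|_*$ is bounded on compact sets, and then one passes to the limit $\tau\to0$, $\eps\to0$, $\delta\to0$ in the resulting \emph{scalar} inequality, where all error terms vanish by pointwise convergence. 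This yields $\Delta_{new}(t,u(t^-),u(t))\le \E(t,u(t^-))-\E(t,u(t))$ directly, which combined with the opposite (lower-bound) inequality gives the balance at jumps; no multiplier convergence, path compactness, or saturation in the limit is required.

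A secondary but real error: your claim that the lower energy-dissipation estimate is ``soft and function-independent'' is false for the continuous part of the evolution. The Fenchel--Young inequality does give $\Delta_{new}(t;a,b)\ge\E(t,a)-\E(t,b)$ unconditionally, so the jump terms are indeed free; but on the non-jump part $\diss_{new}$ carries no viscous term, so Fenchel--Young leaves the uncontrolled remainder $\int \|u'_{co}\|\inf_{z\in\partial\Psi(0)}\|\nabla_x\E+z\|_*\,ds$, which is not dominated by $\diss_{\Psi}$. One genuinely needs $-\nabla_x\E(t,x^{\eps}(t))\in\partial\Psi(0)$ for a.e.\ $t$ (which does \emph{not} hold for an arbitrary BV curve) together with the BV chain rule, as in the paper's Lemma \ref{2Lemma4}; the paper derives this differential stability from the $\eps$-local stability in Step 4 of Lemma \ref{le:eps-evolution}. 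You have all the ingredients for this (your $h\to0$ argument in part (iii) is exactly the needed derivation), but as written your lower bound for $x^{\eps}$ in part (ii) skips the hypothesis that makes it true.
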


An explicit example is given below (a detailed explanation can be found in the Appendix).
\begin{example}\label{example2}
Consider the case $X=\real$, $\Psi(x)=|x|$, $x_0=0$ and the energy functional 
$$\E(t,x):=x^2-x^4+0.3\,x^6+t\,(1-x^2)-x+6,~~t\in [0,2].$$
\begin{itemize}
\item[(i)] The strong solution is $x(t)=0$ for $t \in [0,1)$. This solution cannot be extended continuously when $t \ge 1$, since it would violate the local minimality. 
\item[(ii)] The energetic solution constructed by time-discretization satisfies
\bqq
x(t)=0~\text{ if }~ t <\frac{1}{6}\,\,,\, x(1/6)\in \{0, \sqrt{5/3}\}\;~ \,\,\text{and}~x(t) = \frac{\sqrt{10+\sqrt{10+90t}}}{3} \text{ if }  t>\frac{1}{6}.
\eqq
This solution jumps at $t=1/6$, from $x=0$ to $x=\sqrt{5/3}$, but this jump is not physically relevant (see Figure \ref{fig:fig1} below). The energetic solution satisfies the energy-dissipation balance but it does not satisfy the new energy-dissipation balance. 
\item[(iii)] The BV solution corresponding to the viscous dissipation $\Psi_{\eps}(x)=|x|+\eps x^2$ is 
\[ x(t)=0 ~{\rm for~all}~t\in [0,2].\]
When $t \ge 1$, this solution violates the local minimality.
\item[(iv)] The BV solution constructed by the epsilon-neighborhood method is
\bqq
x(t)=0~~ \text{ if }~ t <1~\,\,\text{and}~ \; x(t) = \frac{\sqrt{10+\sqrt{10+90t}}}{3} \text{ if }  t>1.
\eqq
This solution coincides with the strong solution up to the point where the strong solution exists. Moreover, it jumps at $t=1$ which is a physical relevant jump (see Figure \ref{fig:sfig2} and Figure \ref{fig:fig2} below). The BV solution constructed using the epsilon-neighborhood method satisfies the new energy-dissipation balance but it does not satisfy the energy-dissipation balance.
\item[(v)] The solution constructed by the method in \cite{EfeMie-06} coincides with the solution in (iv).
\end{itemize}

Both solutions in (iii) and (iv) satisfy the definition of BV solutions \cite{MieRosSav-10,MieRosSav-12,MieRosSav-13}. Weak local stability in this case is: $|\partial_x\E(t,x(t))|_* \le 1$.
\end{example}
\begin{figure}
\begin{subfigure}{.5\textwidth}
  \centering
  \includegraphics[width=1\linewidth]{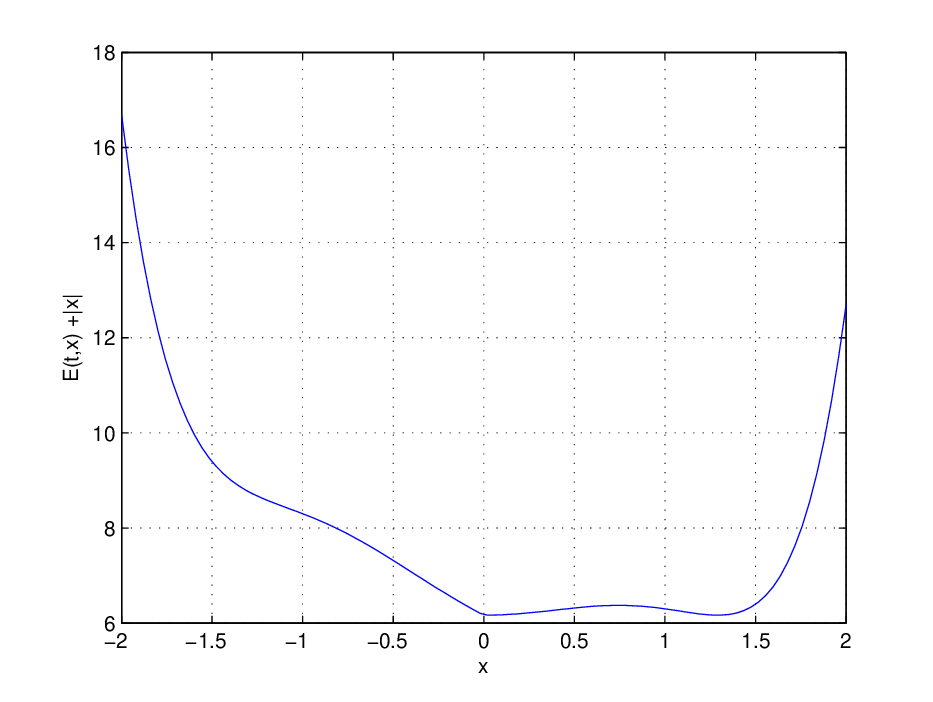}.
  \caption{$\E(t,x)+|x|$ when $t=1/6$}
  \label{fig:sfig1}
\end{subfigure}%
\begin{subfigure}{.5\textwidth}
  \centering
  \includegraphics[width=1\linewidth]{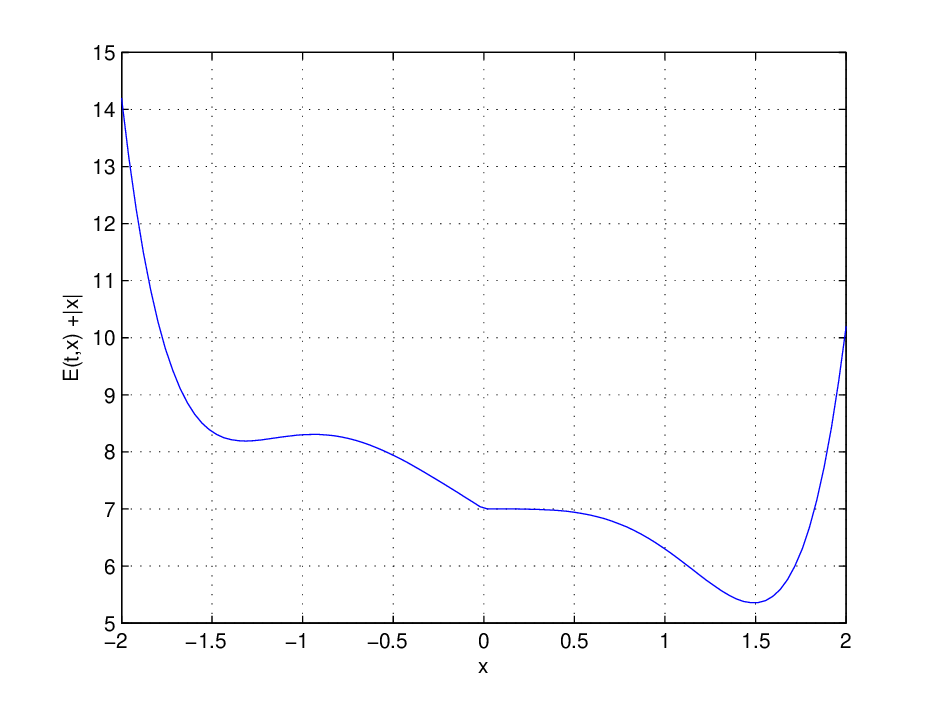}
  \caption{$\E(t,x)+|x|$ when $t=1$}
  \label{fig:sfig2}
\end{subfigure}
\caption{Plots of the total energy at $t=1/6$ and $t=1$ in Example \ref{example2}.}
\label{fig:fig1}
\end{figure}

\begin{figure}
  \centering
  \includegraphics[width=1\linewidth]{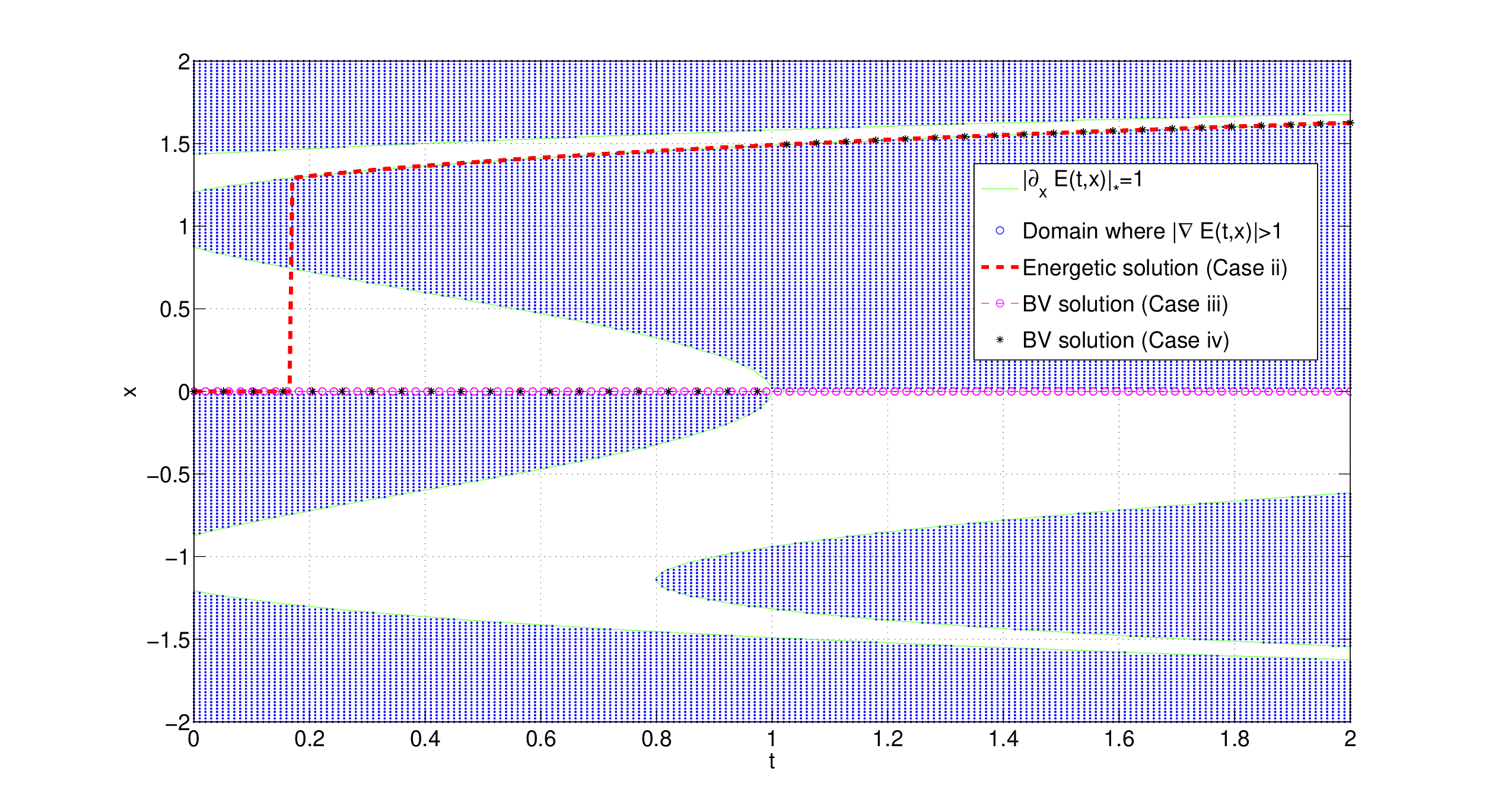}.
\caption{Graph of $|\partial_x \E(t,x)|_* =1$ and three different solutions in Example \ref{example2} are shown. Domain where $|\partial_x \E(t,x)|_*>1$ is filled in.}
\label{fig:fig2}
\end{figure}


\section{Epsilon-neighborhood solution $x^\eps$}

We start by considering the discretized solution.

\begin{lemma}[Discretized solution] \label{le:dis-eps-sol}For any given initial state $x_0$, $\eps>0$, $\tau>0$ and partition $0=t_0<t_1<\dots<t_N\leq T$ of $[0,T]$ satisfying $t_n-t_{n-1}=\tau$ and $T \in [\tau N,\tau (N+1))$, there exists a sequence $\{x^{\eps,\tau}_i\}_{i=0}^N$ such that $x_0^{\eps, \tau}=x_0$ and for every $i=1,2,\dots,N$, $x_i^{\eps, \tau}$ minimizes the functional
$$x \mapsto \E(t_i,x)+\Psi(x-x_{i-1}^{\eps,\tau})$$
over $x\in \real^d$ with $\|x-x^{\eps,\tau}_{i-1}\|\le \eps$. 

Moreover, the function $t\mapsto x^{\eps,\tau}(t)$ defined by the interpolation  $x^{\eps,\tau}(t)=x_{i-1}^{\eps,\tau}$ if $t\in [t_{i-1},t_{i})$, $i\in \{1,...,N\}$ satisfies the following energy estimates:
\begin{itemize}
\item[(i)] (Discrete bound) For any $n\in \{1,\dots,N\}$ we have
$$ \E(t_n,x_n^{\eps,\tau})\le \E(0,x_0)\,e^{\lambda t_n}~ {\rm and}~\E(0,x_n^{\eps,\tau})\le \E(0,x_0)\,e^{2\lambda t_n};$$
\item[(ii)] (Integral bound) 
For all \; $ 0\le s \le t \le T$, it holds that 
$\diss_{\Psi} (x^{\eps,\tau};[s,t])<\infty$, $\d_t \E(\cdot,x ^{\eps,\tau}(\cdot))\in L^1(0,T)$ and
$$ \E(t,x^{\eps,\tau}(t))-\E (s,x^{\eps,\tau}(s))\le \int_{s}^t \d_t \E(r, x^{\eps,\tau}(r))\,dr -\diss_{\Psi}(x^{\eps,\tau};[s,t]).$$
\end{itemize}
\end{lemma}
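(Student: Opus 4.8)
The plan is to build the sequence $\{x_i^{\eps,\tau}\}$ inductively from $x_0^{\eps,\tau}=x_0$ and to read off the energy estimates from the minimality at each step. For existence, fix $i$ and observe that the functional $x\mapsto \E(t_i,x)+\Psi(x-x_{i-1}^{\eps,\tau})$ is continuous on $\real^d$, since $\E$ is $C^1$ and $\Psi$, being convex and finite everywhere, is continuous; the constraint set $\{x:\|x-x_{i-1}^{\eps,\tau}\|\le\eps\}$ is a closed ball, hence compact. By the Weierstrass theorem a minimizer $x_i^{\eps,\tau}$ exists, and iterating over $i=1,\dots,N$ produces the whole sequence and the step function $x^{\eps,\tau}(\cdot)$.

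The heart of the argument is a single-step estimate. Since $x_{i-1}^{\eps,\tau}$ is itself an admissible competitor (its distance to itself is $0\le\eps$), minimality gives $\E(t_i,x_i^{\eps,\tau})+\Psi(x_i^{\eps,\tau}-x_{i-1}^{\eps,\tau})\le\E(t_i,x_{i-1}^{\eps,\tau})$, and because $\Psi\ge0$ with $\Psi(0)=0$ (by $1$-homogeneity) this yields both $\E(t_i,x_i^{\eps,\tau})\le\E(t_i,x_{i-1}^{\eps,\tau})$ and the sharper $\E(t_i,x_i^{\eps,\tau})-\E(t_i,x_{i-1}^{\eps,\tau})\le-\Psi(x_i^{\eps,\tau}-x_{i-1}^{\eps,\tau})$. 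For the discrete bound (i) I would combine the first form with the Gronwall consequence (\ref{E2}), $\E(t_i,x_{i-1}^{\eps,\tau})\le\E(t_{i-1},x_{i-1}^{\eps,\tau})\,e^{\lambda\tau}$, to get $\E(t_i,x_i^{\eps,\tau})\le\E(t_{i-1},x_{i-1}^{\eps,\tau})\,e^{\lambda\tau}$; iterating from $i=1$ to $n$ and using $t_n=n\tau$ gives $\E(t_n,x_n^{\eps,\tau})\le\E(0,x_0)\,e^{\lambda t_n}$, and one further use of (\ref{E2}) between the times $0$ and $t_n$ produces $\E(0,x_n^{\eps,\tau})\le\E(0,x_0)\,e^{2\lambda t_n}$.

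For the integral bound (ii) I would telescope the single-step estimate. On each $[t_{i-1},t_i)$ the function $x^{\eps,\tau}$ equals the constant $x_{i-1}^{\eps,\tau}$, so the fundamental theorem of calculus gives $\E(t_i,x_{i-1}^{\eps,\tau})-\E(t_{i-1},x_{i-1}^{\eps,\tau})=\int_{t_{i-1}}^{t_i}\d_t\E(r,x^{\eps,\tau}(r))\,dr$. For general $s\le t$ with $s\in[t_{j-1},t_j)$ and $t\in[t_{k-1},t_k)$, I would telescope $\E(t,x^{\eps,\tau}(t))-\E(s,x^{\eps,\tau}(s))$ along the chain $(s,x_{j-1}^{\eps,\tau})\to(t_j,x_{j-1}^{\eps,\tau})\to(t_j,x_j^{\eps,\tau})\to\cdots\to(t_{k-1},x_{k-1}^{\eps,\tau})\to(t,x_{k-1}^{\eps,\tau})$, estimating each pure time-change by the fundamental theorem of calculus and each pure state-change at a fixed $t_i$ by the sharp single-step inequality. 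The time-change integrals reassemble exactly into $\int_s^t\d_t\E(r,x^{\eps,\tau}(r))\,dr$, while the state-change terms contribute $-\sum_{i=j}^{k-1}\Psi(x_i^{\eps,\tau}-x_{i-1}^{\eps,\tau})$.

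It then remains to identify this sum with $\diss_{\Psi}(x^{\eps,\tau};[s,t])$: a right-continuous step function has total $\Psi$-variation equal to the sum of $\Psi$ over its jumps, and the jumps of $x^{\eps,\tau}$ inside $[s,t]$ occur precisely at $t_j,\dots,t_{k-1}$, so $\diss_{\Psi}(x^{\eps,\tau};[s,t])=\sum_{i=j}^{k-1}\Psi(x_i^{\eps,\tau}-x_{i-1}^{\eps,\tau})$; finiteness of the dissipation is immediate from the finiteness of the number of jumps and continuity of $\Psi$. The claim $\d_t\E(\cdot,x^{\eps,\tau}(\cdot))\in L^1(0,T)$ follows by bounding $|\d_t\E(r,x_{i-1}^{\eps,\tau})|\le\lambda\,\E(r,x_{i-1}^{\eps,\tau})$ through (\ref{eq:E1}) and controlling the right-hand side by (\ref{E2}) together with part (i), on each of the finitely many subintervals. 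I expect the only delicate point to be the endpoint bookkeeping in the telescoping, i.e. ensuring that the jumps counted in $\diss_{\Psi}(x^{\eps,\tau};[s,t])$ coincide with those in the state-change sum when $s$ or $t$ falls on a grid point; this is a matter of the left/right limit conventions at $t_j$ and $t_{k-1}$ rather than a real analytic obstacle.
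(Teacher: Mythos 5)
Your proposal is correct and follows essentially the same route as the paper's proof: Weierstrass on the compact ball for existence, the single-step minimality estimate combined with the Gronwall consequence (\ref{E2}) for the discrete bound, and a telescoping of the per-step inequalities together with the identification of $\diss_{\Psi}$ of a right-continuous step function with the sum of $\Psi$ over its jumps for the integral bound. The only cosmetic difference is that you split each step into a pure time-change and a pure state-change, whereas the paper merges both into a single inequality $\E(t_k,x_k)-\E(t_{k-1},x_{k-1})\le \int_{t_{k-1}}^{t_k}\d_t\E(r,x_{k-1})\,dr-\Psi(x_k-x_{k-1})$; the content is identical.
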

\begin{proof} Since $x \mapsto \E(t_n,x)+\Psi(x-x_{i-1}^{\eps,\tau})$ is continuous, this functional has a minimizer $x_{i}^{\eps,\tau}$ in the compact set $\|x-x^{\eps,\tau}_{i-1}\|\le \eps$. The energy estimates can be proved similarly for energetic solutions (see e.g. \cite{Mielke-06}). A detailed proof can be found in the Appendix.
\end{proof}

\begin{lemma}[Epsilon-neighborhood solution] \label{le:eps-evolution} Given any initial datum $x_0\in \real^d$ such that $x_0$ is a local minimizer of the functional $x\mapsto \E(0,x)+\Psi(x-x_0)$. Let 
$x^{\eps,\tau}$ be as in Lemma \ref{le:dis-eps-sol}. There exists a subsequence 
$\tau_n\to 0$ such that 
$x^{\eps,\tau_n}(t)\to x^{\eps}(t)$ 
for all $t\in [0,T]$. Moreover, the epsilon-neighborhood solution $x^{\eps}(\cdot)$ satisfies the following properties:

\begin{itemize}

\item[(i)] (Epsilon local stability) If $x^{\eps}(\cdot)$ is  
right-continuous at $t$, namely $\lim_{t'\to t^+} x^{\eps}(t')=x^{\eps}(t)$, then $x^{\eps}(t)$ satisfies the epsilon local stability
$$ \E(t,x^{\eps}(t)) \le  \E(t,x)+\Psi(x-x^{\eps}(t))~~{\rm for~all}~\|x-x^{\eps}(t)\|\le \eps;$$

\item[(ii)] (Energy-dissipation inequalities) We have $\diss_{\Psi} (x^{\eps};[0,T])\le C$ (independent of $\eps$), $\d_t \E(\cdot,x ^{\eps}(\cdot))\in L^1(0,T)$ and for all $ 0\le s \le t \le T$,
$$  -\diss_{new}(x^{\eps};[s,t]) \le \E(t,x^{\eps}(t))-\E (s,x^{\eps}(s)) - \int_{s}^t \d_t \E(r, x^{\eps}(r))\,dr\le  -\diss_{\Psi}(x^{\eps};[s,t]).$$
\end{itemize}
\end{lemma}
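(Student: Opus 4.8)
The plan is to realize $x^{\eps}$ as a pointwise limit of the discretized solutions via Helly's selection theorem, and then to transfer the discrete minimality and the discrete energy--dissipation inequality of Lemma~\ref{le:dis-eps-sol} to the limit. First I would record that $\{x^{\eps,\tau}\}$ is bounded in $BV$ uniformly in $\tau$ (and in $\eps$): taking $s=0$, $t=T$ in Lemma~\ref{le:dis-eps-sol}(ii), using $\E\ge 0$ together with $|\d_t\E|\le\lambda\E$ and the discrete bound $\E(r,x^{\eps,\tau}(r))\le\E(0,x_0)\,e^{\lambda r}$, gives $\diss_{\Psi}(x^{\eps,\tau};[0,T])\le\E(0,x_0)\,e^{\lambda T}=:C$, which depends only on $\E(0,x_0),\lambda,T$. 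Since $\Psi$ is non-degenerate and $1$-homogeneous, $\Psi(v)\ge c\|v\|$ for some $c>0$, so the variation of $x^{\eps,\tau}$ in $\|\cdot\|$ is $\le C/c$ and, starting from $x_0$, the functions are uniformly bounded. Helly's theorem then yields a subsequence $\tau_n\to0$ with $x^{\eps,\tau_n}(t)\to x^{\eps}(t)$ for \emph{every} $t\in[0,T]$, and $x^{\eps}$ is $BV$ with $\diss_{\Psi}(x^{\eps};[0,T])\le C$ by lower semicontinuity; note $C$ is independent of $\eps$.

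For the epsilon local stability, fix a point $t$ of right-continuity of $x^{\eps}$ and, for each $n$, let $j_n$ be the index with $t\in[t_{j_n},t_{j_n+1})$, so $x^{\eps,\tau_n}(t)=x^{\eps,\tau_n}_{j_n}\to x^{\eps}(t)$. The crucial step is to exclude a vanishing ``spike'', i.e. to show $x^{\eps,\tau_n}_{j_n+1}\to x^{\eps}(t)$ as well. For any $t''>t$, Lemma~\ref{le:dis-eps-sol}(ii) on $[t,t'']$ gives $\diss_{\Psi}(x^{\eps,\tau_n};[t,t''])\le \E(t,x^{\eps,\tau_n}(t))-\E(t'',x^{\eps,\tau_n}(t''))+\int_t^{t''}|\d_t\E|\,dr$; letting $n\to\infty$ (endpoint energies converge because $x^{\eps,\tau_n}\to x^{\eps}$ at the fixed points $t,t''$, the integral by dominated convergence) and then $t''\to t^+$ (right-continuity forces $\E(t'',x^{\eps}(t''))\to\E(t,x^{\eps}(t))$ and the integral to $0$) shows $\limsup_n\diss_{\Psi}(x^{\eps,\tau_n};[t,t''])\to0$. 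Since $\|x^{\eps,\tau_n}_{j_n+1}-x^{\eps,\tau_n}_{j_n}\|\le c^{-1}\diss_{\Psi}(x^{\eps,\tau_n};[t,t''])$ for $n$ large, the spike is ruled out. Now the discrete minimality at step $j_n+1$ reads, for all $\|v\|\le\eps$, $\E(t_{j_n+1},x^{\eps,\tau_n}_{j_n+1})+\Psi(x^{\eps,\tau_n}_{j_n+1}-x^{\eps,\tau_n}_{j_n})\le\E(t_{j_n+1},x^{\eps,\tau_n}_{j_n}+v)+\Psi(v)$; passing to the limit with both $x^{\eps,\tau_n}_{j_n},x^{\eps,\tau_n}_{j_n+1}\to x^{\eps}(t)$, $t_{j_n+1}\to t$, the continuity of $\E$ and $\Psi(0)=0$ yields $\E(t,x^{\eps}(t))\le\E(t,x^{\eps}(t)+v)+\Psi(v)$, first for $\|v\|<\eps$ and then, by closure, for $\|v\|\le\eps$.

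For the energy--dissipation inequalities, the upper bound is the stable direction: passing Lemma~\ref{le:dis-eps-sol}(ii) to the limit at arbitrary $s\le t$ — pointwise convergence of the endpoint energies, dominated convergence for $\int_s^t\d_t\E$, and lower semicontinuity of $\diss_{\Psi}$ under pointwise convergence — gives $\E(t,x^{\eps}(t))-\E(s,x^{\eps}(s))\le\int_s^t\d_t\E\,dr-\diss_{\Psi}(x^{\eps};[s,t])$, with $\d_t\E(\cdot,x^{\eps}(\cdot))\in L^1$ by the uniform energy bound. For the lower bound I would first establish the pointwise estimate, valid for every $\gamma\in AC([0,1];\real^d)$ and $z\in\partial\Psi(0)$, $-\langle D_x\E(r,\gamma(\sigma)),\dot\gamma(\sigma)\rangle\le\Psi(\dot\gamma(\sigma))+\|\dot\gamma(\sigma)\|\inf_{z\in\partial\Psi(0)}\|D_x\E(r,\gamma(\sigma))+z\|_*$ (using $\langle z,\dot\gamma\rangle\le\Psi(\dot\gamma)$ and duality), which after integration in $\sigma$ and taking the infimum over paths gives the fundamental inequality $\E(r,a)-\E(r,b)\le\Delta_{new}(r;a,b)$. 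Because the stability of the previous paragraph holds at every right-continuity point, hence at all but countably many $t$, a chain-rule argument shows that on the continuous part the energy release equals $\diss_{\Psi}$, while at each jump $r$ the drop $\E(r,x^{\eps}(r^-))-\E(r,x^{\eps}(r^+))$ is controlled by $\Delta_{new}(r;x^{\eps}(r^-),x^{\eps}(r^+))\le\Delta_{new}(r;x^{\eps}(r^-),x^{\eps}(r))+\Delta_{new}(r;x^{\eps}(r),x^{\eps}(r^+))$ (subadditivity of $\Delta_{new}$ under concatenation). Summing the continuous and jump contributions reproduces exactly $\diss_{new}$, yielding the lower bound.

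I expect the lower bound to be the delicate part. A naive route — summing the discrete costs $\Delta_{new}(t_i;x^{\eps,\tau_n}_{i-1},x^{\eps,\tau_n}_i)$ obtained by telescoping and passing to the limit — fails, because $\Delta_{new}$ is only lower semicontinuous and so cannot be bounded from above in the limit; this is precisely why I would work with the limit $x^{\eps}$ and decompose its energy release into continuous and jump parts. Making that decomposition rigorous requires a chain rule for the $BV$ function $x^{\eps}$, in particular controlling a possible Cantor part, and verifying that the concatenation $x^{\eps}(r^-)\to x^{\eps}(r)\to x^{\eps}(r^+)$ carries all the jump dissipation in the sense demanded by the definition of $\diss_{new}$. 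The sharp continuous balance — where the a.e.\ stability $-D_x\E(t,x^{\eps}(t))\in\partial\Psi(0)$ upgrades the upper bound to an equality off the jump set — is the technical heart of the argument.
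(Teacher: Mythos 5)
Your proposal is correct and follows essentially the same route as the paper: Helly's selection via the uniform dissipation bound, exclusion of the vanishing ``spike'' at right-continuity points so that the discrete minimality passes to the limit, lower semicontinuity of $\diss_{\Psi}$ for the upper energy inequality, and the chain-rule/jump-cost argument for the lower one. The only cosmetic difference is that you re-derive inline what the paper isolates as Lemma~\ref{2Lemma4} (the Mielke--Rossi--Savar\'e lower bound, proved in the paper's appendix), including the a.e.\ inclusion $-\nabla_x \E(t,x^{\eps}(t))\in\partial\Psi(0)$, which the paper obtains from the $\eps$-stability by linearization along segments and the positive $1$-homogeneity of $\Psi$.
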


\begin{proof} {\bf Step 1. Existence.} By the Integral bound in Lemma \ref{le:dis-eps-sol}, the fact that $\E$ is non-negative, and condition (\ref{E2}), we have
\bqq
\diss_{\Psi}(x^{\eps,\tau};[0,T]) &\le& \E(0,x_0) - \E(T,x^{\eps,\tau}(T)) + \int_0^T \d_t \E(r,x^{\eps,\tau}(r))\, dr \hfill\\
&\le& \E(0,x_0) + \sum_{i=1}^{N+1} \int_{t_{i-1}}^{t_i} \lambda \,\E(t_{i-1},x^{\eps,\tau}_{i-1}) \, e^{\lambda(r-t_{i-1})}\, dr.
\eqq
Here, we denote $T$ by $t_{N+1}$. Using the Discrete bound in Lemma \ref{le:dis-eps-sol}, we get
\bqq
\diss_{\Psi}(x^{\eps,\tau};[0,T]) &\le& \E(0,x_0) + \int_0^T \lambda \, \E(0,x_0) \, e^{\lambda r}\, dr= \E(0,x_0)\, e^{\lambda T}.
\eqq
Thus, $\{x^{\eps,\tau}(\cdot)\}$ has uniformly bounded variation and it is uniformly bounded. Therefore, by applying Helly's selection principle \cite{MaiMie-05, AlbDeS-11,Natanson-65}, we can find a subsequence $\tau_n\to 0$ and a BV function $x^{\eps}(\cdot)$ such that $x^{\eps,\tau_n}(t)\to x^{\eps}(t)$ as $n\to \infty$ for all $t\in [0,T]$. 
\text{}\\\\
{\bf Step 2. A consequence of the right-continuity.} Let us denote by $\{t^n_i\}_{i=0}^{N_n}$ the partition corresponding to $\tau_n$ and assume that $t\in [t^n_{i-1},t^n_i)$. It is obvious that
$$x^{\eps,\tau_n}_{i-1}=x^{\eps,\tau_n}(t)\to x^{\eps}(t)$$
as $n\to \infty$. Now we show that if $x^{\eps}(\cdot)$ is right-continuous at $t$, then 
$$x^{\eps,\tau_n}_{i}=x^{\eps,\tau_n}(t_i^n)\to x^{\eps}(t).$$
Let $t'>t$. Thanks to the Integral bound in Lemma \ref{le:dis-eps-sol}, we have
$$ \E(t',x^{\eps,\tau_n}(t'))-\E (t,x^{\eps,\tau_n}(t)) +\diss_{\Psi}(x^{\eps,\tau_n};[t,t']) \le \int_{t}^{t'} \d_t \E(r, x^{\eps,\tau_n}(r))\,dr \le C|t'-t|.$$
Here, the last inequality is due to the continuity of $\d_t\E$ and the fact that $x^{\eps,\tau_n}$ is bounded on $[0,T]$.
For $n$ being large enough, we have $t<t_i^n< t'$. Therefore, 
$$ \Psi(x^{\eps,\tau_n}_i-x^{\eps,\tau_n}_{i-1})\le  \diss_{\Psi}(x^{\eps,\tau_n};[t,t']).$$
Moreover, when $n\to \infty$, we get
$$x^{\eps,\tau_n}(t)\to x^{\eps}(t)~~{\rm and}~x^{\eps,\tau_n}(t')\to x^{\eps}(t').$$
Thus, it follows from the above integral bound that 
$$  \E(t',x^{\eps}(t'))-\E (t,x^{\eps}(t))+ \limsup_{n\to \infty} \Psi(x^{\eps,\tau_n}_i-x^{\eps,\tau_n}_{i-1}) \le C|t'-t|.$$
The inequality above holds for all $t'>t$. Hence, we can take $t'\to t$ and use the assumption $x^{\eps}(t^+)=x^{\eps}(t)$ to obtain
$$\limsup_{n\to \infty} \Psi(x^{\eps,\tau_n}_i-x^{\eps,\tau_n}_{i-1})\le 0.$$
Since $x^{\eps,\tau_n}_{i-1}\to x^{\eps}(t)$, we can conclude that $x^{\eps,\tau_n}_{i}\to x^{\eps}(t)$ as $n \to \infty$.

\text{}\\
{\bf Step 3. Stability.} We show that for all $t\in [0,T]$, if $x^{\eps}(\cdot)$ is right-continuous at $t$, then
$$ \E(t,x^{\eps}(t))\le \E(t,z)+\Psi(z-x^{\eps}(t))~~{\rm for~all}~\|z-x^{\eps}(t)\|\le \eps.$$

To this end, we first prove the result for $z\in \real^d$ with $\|z-x^{\eps}(t)\|<\eps$. Since $\lim_{n\to \infty} x^{\eps,\tau_n}(t) =x^{\eps}(t)$, we get
$$ \|z-x^{\eps,\tau_n}(t)\|<\eps$$
for $n$ large enough. We shall follow the notations in Step 2. The fact that $t \in [t^n_{i-1},t^n_i)$ yields $x^{\eps,\tau_n}(t)=x^{\eps,\tau_n}_{i-1}$. From the definition of $x^{\eps,\tau_n}_{i}$ and condition  $\|z-x^{\eps,\tau_n}_{i-1}\|<\eps$, we obtain
\bqq \label{eq:local-stable-eps-neigh-1}
\E(t^n_i,x^{\eps,\tau_n}_i)+\Psi(x^{\eps,\tau_n}_i-x^{\eps,\tau_n}_{i-1})\le \E(t^n_i,z)+ \Psi(z-x^{\eps,\tau_n}_{i-1}).
\eqq
Taking the limit as $n\to \infty$ and using the fact that both $x^{\eps,\tau_n}_{i-1}$ and $x^{\eps,\tau_n}_i$ converge to $x^{\eps}(t)$ (see Step 2), we have
\bq \label{eq:6Aug-1}
\E(t,x^{\eps}(t)) \le \E(t,z)+ \Psi(z-x^{\eps}(t)) \fall \|z-x^{\eps}(t)\|<\eps. 
\eq

Now for any $z$ satisfying $\|z-x^{\eps}(t)\|=\eps$, we can choose a sequence $z_n$ converging to $z$ such that $\|z_n - x^{\eps}(t)\| < \eps$. Applying (\ref{eq:6Aug-1}) for $z_n$, we get
\bq\label{stability-for-eps-sol-2}
\E(t,x^{\eps}(t)) \le \E(t,z_n) + \Psi(z_n - x^{\eps}(t)).
\eq
Since the mapping $y\mapsto \E(t,y)+ \Psi(y-x^{\eps}(t))$ is continuous, we can take the limit in (\ref{stability-for-eps-sol-2}) and then obtain the result also for $\|z-x^{\eps}(t)\|=\eps$.
\text{}\\\\
{\bf Step 4. Energy-dissipation inequalities.} 

Using the Integral bound in Lemma \ref{le:dis-eps-sol}, we have for all $0\le s\le t\le T$, 
$$ \E(t,x^{\eps,\tau_n}(t))-\E (s,x^{\eps,\tau_n}(s))\le \int_{s}^t \d_t \E(r, x^{\eps,\tau_n}(r))\,dr -\diss_{\Psi}(x^{\eps,\tau_n};[s,t]).$$
Since $x^{\eps,\tau_n}(r)\to x^{\eps}(r)$ for all $r\in [0,T]$, we have
$$ \E(t,x^{\eps,\tau_n}(t)) -\E (s,x^{\eps,\tau_n}(s))\to \E(t,x^{\eps}(t)) -\E (s,x^{\eps}(s))$$
and
$$ \int_{s}^t \d_t \E(r, x^{\eps,\tau_n}(r))\,dr \to  \int_{s}^t \d_t \E(r, x^{\eps}(r))\,dr$$
as $n\to \infty$. Moreover, we have
$$ \liminf_{n\to \infty}\diss_{\Psi}(x^{\eps,\tau_n};[s,t]) \ge \diss_{\Psi}(x^{\eps};[s,t]).$$
Thus we can derive one energy-dissipation inequality
$$ \E(t,x^{\eps}(t))-\E (s,x^{\eps}(s))\le \int_{s}^t \d_t \E(r, x^{\eps}(r))\,dr -\diss_{\Psi}(x^{\eps};[s,t]).$$

We shall use Lemma \ref{2Lemma4} to obtain the other energy-dissipation inequality, 
\begin{eqnarray}\label{lower-1} 
\E(t,x^{\eps}(t))-\E (s,x^{\eps}(s))\ge \int_{s}^t \d_t \E(r, x^{\eps}(r))\,dr -\diss_{ new}(x^{\eps};[s,t]).
\end{eqnarray}
To apply Lemma \ref{2Lemma4}, it is sufficient to verify that $-\nabla_x \E(t,x^{\eps}(t)) \in \partial \Psi(0)$ for a.e. $t\in (0,T)$. Indeed, for every $t\in [0,T]$ such that $x^{\eps}(\cdot)$ is right-continuous at $t$, we have proved in Step 3 the $\eps$-stability 
$$ \E(t,x^{\eps}(t)) \le  \E(t,x)+\Psi(x-x^{\eps}(t))~~{\rm for~all}~\|x-x^{\eps}(t)\|\le \eps.$$
For every $x$ satisfying $\|x-x^{\eps}(t)\|\le \eps$ and for every $s \in [0,1]$, denote by $z=x^{\eps}(t)+s(x-x^{\eps}(t))$. Clearly, $\|z-x^{\eps}(t)\|\le \eps$. Thus,
\[\E(t,x^{\eps}(t)) \le \E(t,z)+\Psi(z-x^{\eps}(t)).\]
This inequality is equivalent to
\[ \frac{\E(t,x^{\eps}(t))-\E(t,x^{\eps}(t)+s(x-x^{\eps}(t)))}{s} \le \Psi(x-x^{\eps}(t)).\]
By taking $s\to 0^+$ and noticing that $\E$ is of class $C^1$, we obtain that 
\[\left<-\nabla_x \E(t,x^{\eps}(t)),x-x^{\eps}(t)\right> \le \Psi(x-x^{\eps}(t)) \; \text{ for all } \; \|x-x^{\eps}(t)\|\le \eps.\]
Now, for every $y \in \mathbb{R}^d \minus \{0\}$, applying the inequality above for $\tilde{y}=x^{\eps}(t)+\eps y/\|y\|$, we get
\[\left<-\nabla_x \E(t,x^{\eps}(t)),y\right> \le \Psi(y).\]
Hence, $-\nabla_x \E(t,x^{\eps}(t)) \in \partial \Psi(0)$ whenever $x^{\eps}(t)$ is right-continuous at $t$.

On the other hand, since $x^{\eps}(\cdot)$ is a BV function, it is continuous except at most countably many points. Thus, we can conclude that $-\nabla_x \E(t,x^{\eps}(t)) \in \partial \Psi(0)$ for a.e. $t \in (0,T)$.
\end{proof}

\begin{lemma}[Lower bound of the new energy-dissipation balance]\label{2Lemma4} For any BV function $u: [0,T] \to \real^d$, energy functional $\E\in C^1([0,T]\times \real^{d})$ and dissipation functional $\Psi$ which is convex and positively $1$-homogeneous, if $-\nabla_x \E(t,u(t))\in \partial \Psi(0)$ for a.e. $t \in (0,T)$, it holds that
$$\E(t_1,u(t_1))-\E(t_0,u(t_0))\geq \int_{t_0}^{t_1}\partial_t \E(s,u(s))\,ds-\diss_{ new}(u;[t_0,t_1]), \; \text{ for all } \; 0\le t_0<t_1\le T.$$
\end{lemma}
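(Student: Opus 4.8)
The plan is to reduce the estimate to a chain rule for the BV map $t\mapsto\E(t,u(t))$ and then to control separately the diffuse and the jump parts of its derivative. First I would rewrite the right-hand side: the atomic part of $\diss_{\Psi}$ at an interior jump $t$ is exactly $\Psi(u(t)-u(t^-))+\Psi(u(t^+)-u(t))$ (with one-sided analogues at $t_0,t_1$), so the subtracted $\Psi$-terms in $\diss_{new}$ cancel precisely this atomic part and leave
$$\diss_{new}(u;[t_0,t_1]) = \diss_{\Psi}^{\mathrm{co}}(u;[t_0,t_1]) + \sum_{t\in J\cap(t_0,t_1)}\bigl(\Delta_{new}(t;u(t^-),u(t))+\Delta_{new}(t;u(t),u(t^+))\bigr) + \Delta_{new}(t_0;u(t_0),u(t_0^+)) + \Delta_{new}(t_1;u(t_1^-),u(t_1)),$$
where $\diss_{\Psi}^{\mathrm{co}}$ is the $\Psi$-variation of the diffuse (absolutely continuous plus Cantor) part $D^{\mathrm{co}}u$ of the measure $Du$. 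The desired inequality then splits into a diffuse estimate and a jump estimate.

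For the diffuse part I would invoke the chain rule for the composition of the $C^1$ function $\E$ with the BV curve $t\mapsto(t,u(t))$, namely
$$\E(t_1,u(t_1))-\E(t_0,u(t_0)) = \int_{t_0}^{t_1}\partial_t\E(s,u(s))\,ds + \int_{(t_0,t_1)}\langle\nabla_x\E(s,u(s)),\,dD^{\mathrm{co}}u\rangle + \sum_{t\in J\cap(t_0,t_1)}\bigl(\E(t,u(t^+))-\E(t,u(t^-))\bigr),$$
together with the one-sided contributions at $t_0,t_1$. Here a preliminary point must be settled, because the hypothesis provides $-\nabla_x\E(t,u(t))\in\partial\Psi(0)$ only for Lebesgue-a.e.\ $t$, whereas $D^{\mathrm{co}}u$ may carry a Cantor part supported on a Lebesgue-null set. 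I would upgrade stability to \emph{every} continuity point of $u$: if $t_*$ is a continuity point, pick stable $t_n\to t_*$; then $u(t_n)\to u(t_*)$, hence $\nabla_x\E(t_n,u(t_n))\to\nabla_x\E(t_*,u(t_*))$, and since $\partial\Psi(0)$ is an intersection of closed half-spaces it is closed, so $-\nabla_x\E(t_*,u(t_*))\in\partial\Psi(0)$. As $D^{\mathrm{co}}u$ is concentrated on continuity points, stability then holds $|D^{\mathrm{co}}u|$-a.e. Writing $dD^{\mathrm{co}}u=\theta\,d|D^{\mathrm{co}}u|$ and using $\langle\nabla_x\E(s,u(s)),\theta(s)\rangle\ge-\Psi(\theta(s))$ gives
$$\int_{(t_0,t_1)}\langle\nabla_x\E(s,u(s)),\,dD^{\mathrm{co}}u\rangle \ge -\int_{(t_0,t_1)}\Psi(\theta)\,d|D^{\mathrm{co}}u| = -\diss_{\Psi}^{\mathrm{co}}(u;[t_0,t_1]).$$

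For the jump part the key is the pointwise bound $\E(t,a)-\E(t,b)\le\Delta_{new}(t;a,b)$. Given $\gamma\in AC([0,1];\real^d)$ with $\gamma(0)=a$, $\gamma(1)=b$, for each $z\in\partial\Psi(0)$ I would split $-\langle D_x\E(t,\gamma),\dot\gamma\rangle = -\langle D_x\E(t,\gamma)+z,\dot\gamma\rangle + \langle z,\dot\gamma\rangle$, estimating the first term by $\|D_x\E(t,\gamma)+z\|_*\,\|\dot\gamma\|$ (dual pairing) and the second by $\Psi(\dot\gamma)$ (definition of $\partial\Psi(0)$); minimizing over $z$ and integrating yields $\E(t,a)-\E(t,b)=-\int_0^1\langle D_x\E(t,\gamma),\dot\gamma\rangle\,ds\le\int_0^1\bigl(\Psi(\dot\gamma)+\|\dot\gamma\|\inf_{z\in\partial\Psi(0)}\|D_x\E(t,\gamma)+z\|_*\bigr)\,ds$, and the claim follows after taking the infimum over $\gamma$. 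Applying this separately to the pairs $(u(t^-),u(t))$ and $(u(t),u(t^+))$ bounds each jump $\E(t,u(t^+))-\E(t,u(t^-))$ from below by minus the corresponding two $\Delta_{new}$ terms, and the same estimate handles the one-sided jumps at $t_0,t_1$. Summing the diffuse bound and the jump bounds and inserting the rewriting of $\diss_{new}$ from the first paragraph produces the asserted lower bound.

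The hard part will be the rigorous justification of the BV chain rule with explicit time dependence and the correct bookkeeping of one-sided limits and endpoint terms; closely tied to this is the measure-theoretic subtlety of transferring the Lebesgue-a.e.\ stability to $|D^{\mathrm{co}}u|$-a.e.\ stability, i.e.\ taming the Cantor part, which is exactly why the density-plus-closedness upgrade is needed rather than a direct application of the hypothesis.
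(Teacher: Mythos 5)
Your proposal is correct and follows essentially the same route as the paper's proof: the chain rule for $\E(t,u(t))$ with $u\in BV$, splitting off the diffuse (absolutely continuous plus Cantor) part of $Du$, bounding it below by $-\diss_{\Psi}^{\mathrm{co}}$ via the stability condition, and handling each jump with the curve estimate $\E(t,a)-\E(t,b)\le\Delta_{new}(t;a,b)$ obtained by the same splitting $-\langle \nabla_x\E+z,\dot\gamma\rangle+\langle z,\dot\gamma\rangle$ and infimum over $z\in\partial\Psi(0)$. One small point in your favor: the paper's proof simply invokes stability ``whenever $u$ is continuous at $t$'' although the hypothesis is only Lebesgue-a.e., whereas your density-plus-closedness upgrade (stable points are dense, $u$ continuous at $t_*$, $\nabla_x\E$ continuous, $\partial\Psi(0)$ closed) explicitly justifies this passage, which is exactly what is needed to control the Cantor part.
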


This result is due to Mielke, Rossi and Savar\'e (see \cite[Proposition 4.2]{MieRosSav-12} for finite-dimensional space and \cite[Theorem 3.11]{MieRosSav-13} for infinite-dimensional space). For the readers' convenience, a proof of Lemma \ref{2Lemma4} is included in the Appendix.

\section{BV solutions constructed using the epsilon-neighborhood method}

\begin{lemma}[Limit of epsilon-neighborhood solutions]\label{le:limit-eps-solution} Given an initial datum $x_0\in \real^d$ which is a local minimizer of the functional $x\mapsto \E(0,x)+\Psi(x-x_0)$. Let $x^{\eps}$ be as in Lemma \ref{le:eps-evolution}. There exists a subsequence 
$\eps_n\to 0$ and a BV function $u$ such that $x^{\eps_n}(t)\to u(t)$ 
for all $t\in [0,T]$. Moreover, the function $u$ satisfies the following properties:

\begin{itemize}

\item[(i)] (Weak local stability) If $t \mapsto u(t)$ is continuous at $t$, then
$$ -\nabla_x \E(t,u(t)) \in \partial \Psi(0);$$

\item[(ii)] (New energy-dissipation balance) For all $0\le s \le t \le T$, one has
$$  \E(t,u(t))-\E (s,u(s))= \int_{s}^t \d_t \E(r, u(r))\,dr-\diss_{new}(u;[s,t]).$$
\end{itemize}
\end{lemma}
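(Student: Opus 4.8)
The plan is to realize $u$ as a further pointwise limit of the family $\{x^{\eps}\}$ and to upgrade, in the limit $\eps\to0$, each property established for $x^{\eps}$ in Lemma~\ref{le:eps-evolution}. \textbf{Existence.} Because $\Psi$ is convex, positively $1$-homogeneous and strictly positive on $\real^d\minus\{0\}$, it is comparable to a norm, i.e. $\Psi(v)\ge c\|v\|$ for some $c>0$ and all $v$. Hence the bound $\diss_{\Psi}(x^{\eps};[0,T])\le C$ from Lemma~\ref{le:eps-evolution}(ii), which is uniform in $\eps$, controls the total variation of $x^{\eps}$ uniformly, and together with $x^{\eps}(0)=x_0$ it bounds $\|x^{\eps}(t)\|$ uniformly in $t$ and $\eps$. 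Helly's selection principle then yields a subsequence $\eps_n\to0$ and a BV function $u$ with $x^{\eps_n}(t)\to u(t)$ for every $t\in[0,T]$.

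\textbf{Weak local stability.} First I would prove the inclusion for a.e. $t$. Let $D\subset[0,T]$ be the countable set obtained as the union of the discontinuity set of $u$ with the right-discontinuity sets of all the $x^{\eps_n}$. For $t\notin D$ every $x^{\eps_n}$ is right-continuous at $t$, so the argument in Step~4 of the proof of Lemma~\ref{le:eps-evolution} gives $-\nabla_x\E(t,x^{\eps_n}(t))\in\partial\Psi(0)$; passing to the limit and using $x^{\eps_n}(t)\to u(t)$, the continuity of $\nabla_x\E$ and the closedness of $\partial\Psi(0)$, I obtain $-\nabla_x\E(t,u(t))\in\partial\Psi(0)$. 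To reach an arbitrary continuity point $t$ of $u$, I would choose $t_k\to t$ with $t_k\notin D$ and pass the inclusion from $t_k$ to $t$ via $u(t_k)\to u(t)$ and closedness of $\partial\Psi(0)$ once more.

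\textbf{Energy-dissipation balance.} The inequality ``$\ge$'' is precisely Lemma~\ref{2Lemma4}, applicable since weak local stability holds a.e. For ``$\le$'' I would pass the right-hand inequality of Lemma~\ref{le:eps-evolution}(ii) to the limit along $\eps_n$: pointwise convergence of $x^{\eps_n}$ with continuity of $\E$ handles the energy terms, dominated convergence (with domination from the uniform energy bounds and (\ref{E2})) handles $\int_s^t\d_t\E$, and lower semicontinuity of $\diss_{\Psi}$ handles the dissipation. This gives only
\[\E(t,u(t))-\E(s,u(s))\le\int_s^t\d_t\E(r,u(r))\,dr-\diss_{\Psi}(u;[s,t]),\]
which is weaker than required because $\diss_{\Psi}\le\diss_{new}$.

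\textbf{Main obstacle.} Set $G:=\int_s^t\d_t\E(r,u(r))\,dr-\bigl(\E(t,u(t))-\E(s,u(s))\bigr)$; Lemma~\ref{2Lemma4} reads $G\le\diss_{new}(u;[s,t])$ and the passage above reads $G\ge\diss_{\Psi}(u;[s,t])$, so the whole balance reduces to proving $G\ge\diss_{new}(u;[s,t])$. The missing amount, $\diss_{new}-\diss_{\Psi}$, is the viscous transition cost carried by the (countable) jump set and is invisible to the lower semicontinuity used above; it must instead be extracted from the energy. Since $G$ is additive in the interval and its continuous part already matches $\diss_{\Psi}$, it suffices to show at each jump time $t_0$ that the energy drop $\E(t_0,u(t_0^-))-\E(t_0,u(t_0^+))$ dominates $\Delta_{new}(t_0;u(t_0^-),u(t_0))+\Delta_{new}(t_0;u(t_0),u(t_0^+))$. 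To this end I would reparametrize, for small $\delta>0$, the paths $x^{\eps_n}$ on $[t_0-\delta,t_0+\delta]$ by $\Psi$-arclength, rescale to $[0,1]$, and extract by Arzel\`a--Ascoli a Lipschitz limit $\gamma$ running from $u(t_0^-)$ to $u(t_0^+)$ and through $u(t_0)$. The crux is that the $\eps$-ball optimality in (\ref{eq:dis-prob}) forces each effective step of the transition onto the sphere $\|x_i-x_{i-1}\|=\eps$ together with its Lagrange condition, which in the limit becomes the Mielke--Rossi--Savar\'e contact identity
\[\langle-\nabla_x\E(t_0,\gamma),\dot\gamma\rangle=\Psi(\dot\gamma)+\|\dot\gamma\|\,\inf_{z\in\partial\Psi(0)}\|\nabla_x\E(t_0,\gamma)+z\|_{*}\quad\text{a.e.}\]
Integrating it shows that the energy drop along $\gamma$ equals the transition cost; splitting $\gamma$ at its passage through $u(t_0)$ and using that $\Delta_{new}$ is an infimum over paths then yields the desired domination. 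Verifying this contact identity from the discrete $\eps$-neighborhood minimality, and handling the interaction of nearby jumps when localizing, is where the genuine difficulty lies; combined with Lemma~\ref{2Lemma4} it closes the balance.
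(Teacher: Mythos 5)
Your skeleton coincides with the paper's own proof: Helly compactness for existence, passage of the $\eps$-stability to the limit on the co-countable set where all $x^{\eps_n}$ are right-continuous (then extension to continuity points of $u$ by closedness of $\partial\Psi(0)$), the lower bound from Lemma~\ref{2Lemma4}, and the correct reduction of the balance to the jump inequality $\Delta_{new}(t;u(t^-),u(t))+\Delta_{new}(t;u(t),u(t^+))\le\E(t,u(t^-))-\E(t,u(t^+))$. The genuine gap sits exactly where you admit "the genuine difficulty lies": you never derive the contact information from the $\eps$-neighborhood minimality, and the one concrete mechanism you do assert --- that optimality ``forces each effective step of the transition onto the sphere $\|x_i-x_{i-1}\|=\eps$'' --- is false as stated: a minimizer over the closed ball may well be interior (this is what happens along stable stretches), and nothing forces saturation of the constraint even during a jump. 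What is true, and what the paper proves in Lemma~\ref{le:discrete-solution}, is that $x_i$, being a minimizer over the ball, is in particular a minimizer over the sphere of radius $c:=\|x_i-x_{i-1}\|\le\eps$, whatever $c$ turns out to be; the Lagrange multiplier rule on that sphere, combined with one-dimensional minimality along the segment $[x_{i-1},x_i]$ (which fixes the sign $\lambda\le0$ of the multiplier --- without this the identification of $-\lambda$ with the distance term fails), gives the exact discrete identity
$\left<-\nabla_x\E(t_i,x_i),x_i-x_{i-1}\right>=\Psi(x_i-x_{i-1})+\min_{\eta\in\partial\Psi(0)}\|\eta+\nabla_x\E(t_i,x_i)\|_*\cdot\|x_i-x_{i-1}\|$.
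Your proposal uses this as an unproven black box, so the upper bound at jumps, hence the balance, is not established.

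Moreover, once the discrete identity is in hand, your Arzel\`a--Ascoli limit-path detour is unnecessary and strictly harder than what is needed: since $\Delta_{new}$ is an infimum over paths, there is no need to produce a limit path satisfying the contact identity a.e.; one simply evaluates the infimum at an explicit competitor. This is the paper's Lemma~\ref{le:upper-bound}: linearly interpolate the points $u(t^-)$, $x^{\eps,\tau}(t-\delta)=x^{\eps,\tau}(t_i)$, $x^{\eps,\tau}(t_{i+1})$, \dots, $x^{\eps,\tau}(t_{i+k})=x^{\eps,\tau}(t)$, $u(t)$; on the middle segments the discrete identity plus uniform continuity of $\nabla_x\E$ bounds the cost by the telescoping energy drop up to an error $g(\delta)$ times the (uniformly bounded) variation, and the two end segments cost at most $C\|u(t^-)-x^{\eps,\tau}(t-\delta)\|+C\|x^{\eps,\tau}(t)-u(t)\|$; all errors vanish in the iterated limit $\tau\to0$, then $\eps\to0$, then $\delta\to0$. (Even if you insisted on extracting a limit path, you would not need the a.e. identity you cannot verify: lower semicontinuity of the cost functional under weak convergence of derivatives goes in the favorable direction once the \emph{discrete} cost-versus-energy-drop bound is known --- but that discrete bound is precisely the missing ingredient.)
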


\begin{proof} {\bf Step 1. Existence.} Since $\diss_{\Psi}(x^\eps;[0,T])\le C$ is independent of $\eps$, by Helly's selection principle, we can find a subsequence $\eps_n\to 0$ and a BV function $u$ such that $x^{\eps_n}(t)\to u(t)$ as $n\to \infty$ for all $t\in [0,T]$.
\text{}\\\\
{\bf Step 2. Stability.} Let 
$$A:=\{t\in [0,T]\,|\, x^{\eps_n}(\cdot)~{\rm is~ right~ continuous~ at}~t {\rm ~for ~all} ~n\ge 1 \}.$$
Then $[0,T]\minus A$ is at most countable. Moreover, for $t\in A$, by Lemma \ref{le:eps-evolution}, we get
$$ \E(t,x^{\eps_n}(t))\le \E(t,z)+\Psi(z-x^{\eps_n}(t))~~{\rm for ~all}~\|z-x^{\eps_n}(t)\|\le \eps_n$$
for all $n\ge 1$. For $t \in A$ and $n \ge 1$,
$$ \left<-\nabla_x \E(t,x^{\eps_n}(t)), z\right> \le \Psi(z)~~{\rm for~all}~z\in \mathbb{R}^d$$
can be shown in a similar manner as in Step 4, Lemma \ref{le:eps-evolution}.
Taking $n\to \infty$, we obtain
$$ \left<-\nabla_x \E(t,u(t)), z\right> \le \Psi(z)~~{\rm for~all}~z\in \mathbb{R}^d,~{\rm for~all}~t \in A.$$
By continuity, we immediately have $-\nabla_x\E(t,u(t))\in \partial \Psi(0)$ provided that $u$ is continuous at $t$.
\text{}\\\\
{\bf Step 3. New energy-dissipation balance.} By means of a similar proof of the energy inequalities in Lemma \ref{le:eps-evolution}, we have
$$  -\diss_{new}(u;[s,t]) \le \E(t,u(t))-\E (s,u(s)) - \int_{s}^t \d_t \E(r, u(r))\,dr\le  -\diss(u;[s,t]).$$
(The second inequality is a consequence of the corresponding inequality of $x^\eps$ in Lemma \ref{le:eps-evolution} and Fatou's lemma, while the first inequality follows from Lemma \ref{2Lemma4}.)

Note that if the solution $t \mapsto u(t)$ is continuous on $[a,b] \subset [0,T]$, then $\diss(u;[a,b])=\diss_{new}(u;[a,b])$. Thus, we immediately have the energy-dissipation balance
$$\E(b,u(b))-\E (a,u(a)) - \int_{a}^b \d_t \E(r, u(r))\,dr=-\diss(u;[a,b])=-\diss_{new}(u;[a,b]).$$

Therefore, jump points remain to be considered. More precisely, we need to show that if  $u$ jumps at $t\in (0,T)$, namely $u(t^-)\ne u(t^+)$, then
$$\E(t,u(t^+))-\E(t,u(t^-))=-\Delta_{ new}(t,u(t^-),u(t))-\Delta_{ new}(t,u(t),u(t^+)).$$
This fact follows from Lemma \ref{2Lemma4}, \ref{le:discrete-solution} and  \ref{le:upper-bound}.
\end{proof}

To prove the upper bound, we start by showing that the discretized solution $x^{\eps,\tau}$ is ``almost" an optimal transition. 

\begin{lemma}[Approximate optimal transition]\label{le:discrete-solution} 
For the discretized solution $x^{\eps,\tau}$, if we write $x_j:=x^{\eps,\tau}(t_j)$, it holds that
\bqq
\left<-\nabla_x \E(t_i, x_i ), x_i -x_{i-1}\right>= \Psi(x_i-x_{i-1}) + \min_{\eta \in \partial \Psi(0)} \|\eta + \nabla_x\E(t_i,x_i)\|_* \cdot \|x_i-x_{i-1}\|.
\eqq
Consequently, for any $\delta>0$, there exist  $\eps, \tau \le \delta$ and $g(\delta)$ satisfying $g(\delta) \to 0$ as $\delta \to 0$ and 
\bqq
 \E(t,x_{i-1})-\E(t,x_i)\ge \int_{a}^b \Psi(\dot{v}(s)) + \min_{\eta \in \partial \Psi(0)} \|\eta + \nabla_x \E(t,v(s))\|_* \cdot \|\dot{v}(s)\|\, ds - (b-a)g(\delta) \|x_i-x_{i-1}\|,
\eqq
where $t \in [t_{i-1},t_i]$ and $v:[a,b]\to \real^d$ is the linear curve connecting $x_{i-1}$ and $x_i$, namely
$$v(s)=x_{i-1}+\frac{s-a}{b-a} (x_{i}-x_{i-1}).$$
\end{lemma}

\begin{proof} The proof is trivial when $x_i=x_{i-1}$. Hence, we shall assume that $x_i \ne x_{i-1}$.

\text{}\\
{\bf Step 1.} Denote by $m(z):=\|z-x_{i-1}\|$ and $h(z):=\E(t_i,z)+\Psi(z-x_{i-1})$. Recall that $x_i$ is a minimizer for 
$$\inf_{m(z) \le \eps}h(z).$$ 
Denote by $c: =\|x_i-x_{i-1}\|$. Since $c\le \eps$, we can consider $x_{i}$ as a minimizer for 
$$ \inf_{m(z)= c}h(z).$$
Using the Lagrange multiplier, there exists $\lambda\in \real$ such that $ \lambda \nabla m(x_i) \in \partial h(x_i)$, or equivalently
\[\lambda\nabla m(x_i) - \nabla_x \E(t_i,x_i) \in \partial \Psi(x_i-x_{i-1}).\]
The inclusion above implies two following conditions:
\begin{itemize}
\item[i.] For all $z\in \mathbb{R}^d$, it holds that $\left<\lambda \nabla m(x_i)-\nabla_x \E(t_i,x_i), z \right> \le \Psi(z).$

\item[ii.] $\left<\lambda \nabla m(x_i)-\nabla_x \E(t_i,x_i),x_i-x_{i-1}\right>= \Psi(x_i-x_{i-1}).$
\end{itemize}

\text{}\\
{\bf Step 2.} Since the function $h_1(s)= h(x_{i-1}+s(x_i-x_{i-1}))$ satisfies $h_1(s)\ge h_1(1)$ for all $s\in [0,1]$, it follows that
\[\E(t_i,x_{i-1}+s(x_i-x_{i-1})) + s\Psi(x_i-x_{i-1}) \ge \E(t_i,x_i)+\Psi(x_i-x_{i-1}).\]
The above inequality can be rewritten as
\[\frac{\E(t_i,x_i+(s-1)(x_i-x_{i-1}))-\E(t_i,x_i)}{s-1}+\Psi(x_i-x_{i-1})\le 0.\]
Since $\E$ is of class $C^1$, we can conclude that
\begin{eqnarray}\label{16June}
\left< \nabla_x \E(t_i,x_i),  x_{i} - x_{i-1}\right>+\Psi(x_{i} - x_{i-1}) \le 0.
\end{eqnarray}
In addition, (\ref{16June}) and Condition ii) in Step 1 give $\lambda \le 0$. Moreover, for all $\eta \in \partial \Psi(0)$ we have $-\Psi(x_i-x_{i-1}) \le \left<-\eta , x_i-x_{i-1}\right>$. Thus, condition ii) implies
\begin{eqnarray*}
\left<-\lambda\nabla m(x_i),x_i-x_{i-1}\right> &=& \left<-\nabla_x \E(t_i,x_i),x_i-x_{i-1}\right> - \Psi(x_i-x_{i-1})\hfill\\
&\le&\left<-\nabla_x \E(t_i,x_i)-\eta,x_i-x_{i-1}\right> \hfill\\
&\le& \|-\nabla_x \E(t_i,x_i)-\eta\|_*\cdot \|x_i-x_{i-1}\|.
\end{eqnarray*}
Choosing $\eta_0 = -\nabla_x \E(t_i,x_i)+\lambda \nabla m(x_i)$. Thanks to Condition i) in Step 1, $\eta_0 \in  \partial \Psi(0)$. Moreover, the two inequalities above become equalities with that choice of $\eta_0$. Thus, we can write 
\[\left<-\lambda \nabla m(x_i),x_i-x_{i-1}\right> =\min_{\eta \in \partial \Psi(0)}\|\eta + \nabla_x \E(t_i,x_i)\|_* \cdot \|x_i-x_{i-1}\|.\]
Hence, we obtain that
\bqq
\left<-\nabla_x \E(t_i, x_i ),x_i -x_{i-1}\right>= \Psi(x_i-x_{i-1}) + \min_{\eta \in \partial \Psi(0)} \|\eta + \nabla_x\E(t_i,x_i)\|_* \cdot \|x_i-x_{i-1}\|.
\eqq
\text{}\\
{\bf Step 3.} Consequently, using $|t-t_{i}|\le \delta$, $\|x_{i-1}-x_i\|\le \eps \le \delta$ and the fact that $\nabla_x \E(\cdot, \cdot)$ is continuous on compact sets, there exists $g(\delta)$ such that $g(\delta) \to 0$ when $\delta \to 0$ and
\bqq
\left<-\nabla_x \E(t, v(s)),\dot{v}(s)\right> \ge \Psi(\dot{v}(s)) + \min_{\eta \in \partial \Psi(0)}\|\eta + \nabla_x \E(t,v(s))\|_* \cdot  \|\dot{v}(s)\| - g(\delta) \,\|{\dot{v}(s)}\|
\eqq
for every $s\in [a,b]$. Therefore,
\bqq
 \E(t,x_{i-1})-\E(t,x_i)&=& \int_{a}^b \left<-\nabla_x \E(t, v(s)), \dot{v}(s)\right> \, ds \hfill\\
 &\ge& \int_{a}^b \Psi(\dot{v}(s)) + \min_{\eta \in \partial \Psi(0)} \|\eta + \nabla_x \E(t,v(s))\|_* \cdot \|\dot{v}(s)\|\, ds
  -(b-a) g(\delta) \|x_i-x_{i-1}\|.
\eqq
\end{proof}

Now we are in the position to prove the new energy-dissipation upper bound at jumps.
\begin{lemma}[Upper bound]\label{le:upper-bound} Let $u$ be the function as in Lemma \ref{le:limit-eps-solution}. If $u(t^-) \ne u(t)$, then
\bqq
\Delta_{new}(t,u(t^-),u(t)) \le \E(t, u(t^{-})) -\E(t, u(t )).
\eqq
\end{lemma}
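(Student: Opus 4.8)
The plan is to realize $\Delta_{new}(t;u(t^-),u(t))$ as a limit of the ``lengths'' of the broken-line paths traced out by the discretized solutions $x^{\eps,\tau}$ across the jump window, bounding each such length from above by an energy drop via Lemma~\ref{le:discrete-solution}. Fix the jump time $t$ and a time $s<t$. For fixed $\eps=\eps_n$ and small $\tau=\tau_m$, let the partition points of $(s,t]$ be $t_j<\dots<t_{k-1}$, so that $x^{\eps,\tau}(s)=x_{j-1}$ and $x^{\eps,\tau}(t)=x_{k-1}$ in the notation $x_i:=x^{\eps,\tau}(t_i)$. I would form the path $\gamma_{\eps,\tau}$ by concatenating the linear segments $v_i$ joining $x_{i-1}$ to $x_i$ for $i=j,\dots,k-1$, each parametrized on a sub-interval of length $p_i$ with $\sum_i p_i=1$. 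Since the integrand defining $\Delta_{new}$ is positively $1$-homogeneous in the velocity, the length of $\gamma_{\eps,\tau}$ does not depend on this parametrization and equals the sum over $i$ of the integrals in Lemma~\ref{le:discrete-solution}; in particular $\Delta_{new}(t;x^{\eps,\tau}(s),x^{\eps,\tau}(t))$ is bounded above by the length of $\gamma_{\eps,\tau}$.

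Next I would apply the consequence in Lemma~\ref{le:discrete-solution} to each segment with $\delta:=\eps+\tau+(t-s)$, which dominates $\eps+|t-t_i|$ for every $i$ in the window. Summing the resulting inequalities, the energy terms telescope to $\E(t,x^{\eps,\tau}(s))-\E(t,x^{\eps,\tau}(t))$, while the total error is $g(\delta)\sum_i p_i\|x_i-x_{i-1}\|\le g(\delta)\,\eps$, because each step satisfies the constraint $\|x_i-x_{i-1}\|\le\eps$ and $\sum_i p_i=1$. This yields
\[
\Delta_{new}(t;x^{\eps,\tau}(s),x^{\eps,\tau}(t)) \le \E(t,x^{\eps,\tau}(s))-\E(t,x^{\eps,\tau}(t)) + g(\delta)\,\eps, \qquad \delta:=\eps+\tau+(t-s).
\]

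Finally I would pass to the limit $\tau_m\to0$, then $\eps_n\to0$, then $s\to t^-$, using the triangle inequality for $\Delta_{new}$ (it is a length functional, hence additive under concatenation) together with the elementary bound obtained from the straight segment $\sigma\mapsto a+\sigma(b-a)$,
\[
\Delta_{new}(t;a,b) \le \Psi(b-a)+\|b-a\|\int_0^1 \min_{\eta\in\partial\Psi(0)}\|\nabla_x\E(t,a+\sigma(b-a))+\eta\|_*\,d\sigma \le C\|b-a\|,
\]
where $C$ is finite and uniform on the relevant compact set since $\E\in C^1$ and $\partial\Psi(0)$ is compact. Writing
\[
\Delta_{new}(t;u(t^-),u(t)) \le \Delta_{new}(t;u(t^-),x^{\eps,\tau}(s)) + \Delta_{new}(t;x^{\eps,\tau}(s),x^{\eps,\tau}(t)) + \Delta_{new}(t;x^{\eps,\tau}(t),u(t)),
\]
the two outer terms are at most $C\|u(t^-)-x^{\eps,\tau}(s)\|$ and $C\|x^{\eps,\tau}(t)-u(t)\|$. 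As $\tau_m\to0$ one has $x^{\eps,\tau_m}(s)\to x^{\eps}(s)$ and $x^{\eps,\tau_m}(t)\to x^{\eps}(t)$ by Lemma~\ref{le:eps-evolution}; as $\eps_n\to0$ these tend to $u(s)$ and $u(t)$ by Lemma~\ref{le:limit-eps-solution}; and $g(\delta)\eps\to0$ since $g$ stays bounded while $\eps\to0$. Hence the outer terms vanish and the middle term tends to $\E(t,u(s))-\E(t,u(t))$; letting $s\to t^-$ and using $u(s)\to u(t^-)$ with the continuity of $\E(t,\cdot)$ gives the claimed bound.

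The step I expect to be most delicate is the uniform control of the accumulated error across the jump window together with the bookkeeping of the iterated limits: one must ensure that the number of $\eps$-steps in $(s,t]$, which grows as $\eps\to0$, does not spoil the estimate, which is exactly why the error $g(\delta)\sum_i p_i\|x_i-x_{i-1}\|\le g(\delta)\,\eps$ has to be made independent of the number of steps, and why the limits must be taken in the order $\tau\to0$, $\eps\to0$, $s\to t^-$ so that $\delta=\eps+\tau+(t-s)$ and all connector costs vanish. The triangle inequality combined with the straight-segment Lipschitz bound for $\Delta_{new}$ is precisely what lets me transfer the estimate from the discrete endpoints to $u(t^-)$ and $u(t)$ without having to establish lower semicontinuity of $\Delta_{new}$ in its endpoints.
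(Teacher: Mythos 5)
Your proof is correct and follows essentially the same route as the paper: both bound $\Delta_{new}(t,u(t^-),u(t))$ by the length of the broken line through the discretized values $x^{\eps,\tau}(t_i)$ across the jump window, apply Lemma~\ref{le:discrete-solution} segment by segment so that the energies telescope, estimate the connector pieces to $u(t^-)$ and $u(t)$ by $C\|\cdot\|$, and pass to the iterated limits $\tau\to 0$, then $\eps\to 0$, then the window shrinks. The only cosmetic differences are that the paper glues the connectors into one single interpolating path (rather than invoking subadditivity of $\Delta_{new}$ under concatenation, as you do) and bounds the accumulated error by $Cg(\delta)$ via the uniform variation bound, whereas your bound $g(\delta)\,\eps$ via the step constraint $\|x_i-x_{i-1}\|\le\eps$ is equally valid and slightly cleaner.
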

\begin{proof}
Let $0\ll \tau \ll \eps \ll \delta \ll 1$. By the definition of the discretized solution $x^{\eps,\tau}$, for every $t\in (0,T)$ we have
$$ x^{\eps,\tau}(t-\delta)=x^{\eps,\tau}(t_i)\,\,\,{\rm and}\,\, x^{\eps,\tau}(t)=x^{\eps,\tau}(t_{i+k})$$
for $t_i,t_{i+k} \in [t-2\delta,t+\delta]$. 

We can construct an absolutely continuous function $v:[0,1]\to \real^d$ by linearly interpolating the following $(k+3)$ points:
\[\begin{gathered}
  u({t^ - }), {x^{\varepsilon ,\tau }}(t - \delta ) = {x^{\varepsilon ,\tau }}({t_i}) , {x^{\varepsilon ,\tau }}({t_{i + 1}}) , \dots , {x^{\varepsilon ,\tau }}({t_{i+k}}) = {x^{\varepsilon ,\tau }}(t) , u(t). \hfill \\ 
\end{gathered} \]
More precisely, we define   
\[\begin{gathered}
  {z_0} = u({t^ - }), \hfill \\
  {z_1} = x^{\eps,\tau}(t - \delta )= {x^{\varepsilon ,\tau }}({t_i}), \hfill \\
  {z_2} = {x^{\varepsilon ,\tau }}({t_{i + 1}}), \hfill \\
  \dots \hfill \\
  {z_{k + 1}} = {x^{\varepsilon ,\tau }}({t_{i + k}})=x^{\eps,\tau}(t), \hfill \\
  {z_{k + 2}} = u(t), \hfill \\ 
\end{gathered} \]
and denote $r:=1/(k+2)$ and  
$$ v(s)=z_j + \frac{s-jr}{r}(z_{j+1}-z_j)~~{\rm when}~~s\in [jr,(j+1)r],~j=0,1,\dots,k+1.$$
By the definition of the new dissipation, we have
\bqq
\Delta_{new}(t,u(t^-),u(t)) &\le& \int_0^1 \Psi(\dot{v}(s)) + \min_{\eta \in \partial \Psi(0)} \|\eta + \nabla_x \E(t,v(s))\|_* \cdot \|\dot{v}(s)\| \,ds \hfill\\
&=&\sum_{j=0}^{k+1} \int_{jr}^{(j+1)r} \Psi(\dot{v}(s))+ \min_{\eta \in \partial \Psi(0)} \|\eta + \nabla_x \E(t,v(s))\|_* \cdot \|\dot{v}(s)\| \,ds.
\eqq
When $j=0$ and $j=k+1$, we estimate 
\bqq
\int_{jr}^{(j+1)r} \Psi(\dot{v}(s))+ \min_{\eta \in \partial \Psi(0)} \|\eta+\nabla_x \E(t,v(s))\|_* \cdot \|\dot{v}(s)\| \le C\int_{jr}^{(j+1)r} \|\dot{v}(s)\| \, ds=C\|z_{j+1}-z_j\|. 
\eqq 
When $j=1,2,\dots,k$, Lemma \ref{le:discrete-solution} yields the following inequality
\bqq
\int_{jr}^{(j+1)r}\Psi(\dot{v}(s))+ \min_{\eta \in \partial \Psi(0)} \|\eta + \nabla_x \E(t,v(s))\|_* \cdot \|\dot{v}(s)\| \, ds &\le&  \E(t,{x^{\eps,\tau}(t_{i+j - 1})}) - \E(t,{x^{\eps,\tau}(t_{i+j})})\hfill\\
&& + r g(\delta )\cdot \|x^{\eps,\tau}(t_{i+j}) -x^{\eps,\tau}(t_{i+j - 1})\|, 
\eqq 
where $g(\delta)\to 0$ as $\delta\to 0$. Taking the sum over $j=0,1,\dots,k+1$ and using the bound $\Diss_{\Psi} (x^{\eps,\tau};[0,T]) \le C$ (independent of $\eps$ and $\tau$), we find that
\bqq
\Delta_{new}(t,u(t^-),u(t)) &\le & \int_0^1 \Psi(\dot{v}(s)) + \min_{\eta \in \partial \Psi(0)} \|\eta + \nabla_x \E(t,v(s))\|_* \cdot \|\dot{v}(s)\| \,ds \hfill\\
&\le & \E(t, x^{\eps,\tau}(t-\delta ) ) -\E(t, x^{\eps,\tau}(t )) + C g(\delta) + C\|u({t^{-}})-x^{\eps,\tau}(t-\delta)\| + C \|x^{\eps,\tau}(t)-u(t)\|.
\eqq
Taking the limit $\tau\to 0$, then $\eps\to 0$, then $\delta\to 0$, we conclude that
\bqq
\Delta_{new}(t,u(t^-),u(t)) \le \E(t, u(t^{-})) -\E(t, u(t )).
\eqq
\end{proof}

\section{Appendix: Technical proofs}

\subsection{Example \ref{example2}}
First of all, it is easy to verify that $\E(t,x): [0,2] \times \mathbb{R} \to [0,+\infty)$ is $C^1$ and satisfies condition (\ref{eq:E1}). Moreover, $x_0=0$ is a local minimizer for the functional $x \mapsto \E(0,x)+|x|$.
\subsubsection*{\bf Part I. Energetic solution via time-discretization.}
\text{}\\
{\bf Step 1.} Fix a time step $\tau>0$. To find the discretized solution $x^{\tau}(t)$, it is sufficient to calculate $x_i:=x^\tau(t_i)$ where $0=t_0<\dots<t_N\le 1$ and $t_i-t_{i-1}=\tau \fall i=1,2,\dots,N.$ Here $N\in \mathbb{N}$ satisfies $1\in [\tau N, \tau (N+1))$. 

We have $x_0=0$ and for all $i=1,2,\dots,N$, $x_i$ is a minimizer of the functional
$$ x \in \real \mapsto \E(t_i,x)+|x-x_{i-1}|.$$
{\bf Step 2.} Let us fix $t\in (0,2]$ and consider the functional
$$ F(x):=\E(t,x)+|x| = x^2-x^4+0.3\,x^6+t(1-x^2)-x+|x| +6,~~x\in \real.$$ 
It is easy to see that
\begin{itemize}
\item When $t\le 1$, $F(x)$ has two local minimizers (see Figure \ref{fig:sfig1})
$$x=0~~{\rm and}~~x=y(t):=\frac{\sqrt{10+\sqrt{10+90 t}}}{3}.$$
Moreover,
$$F(y(t))-F(0)= \frac{1}{243}(10+\sqrt{10+90t})(8-18t-\sqrt{10+90t}),$$
which is positive if $t<1/6$ and negative if $t>1/6$. Hence $F$ has a unique global minimizer $x=0$ if $0\le t<1/6$, and then $F$ has a unique global minimizer at $x=y(t)$ if $1/6<t<1$.

\item When $t>1$, $F(x)$ has a unique local (also global) minimizer at $x= y(t)$.
\end{itemize}
\text{}\\
{\bf Step 3.} By induction, we can show that if $t_{i_0}<1/6\le t_{i_0+1}$, then $x_i=0$ for all $i=1,2,...,i_0$, and either $x_{i_0+1}=y(t_{i_0+1})$, or $x_{i_0+1}=0$ and $x_{i_0+2}=y(t_{i_0+2})$.  

Next, we show that if $t_{i-1}\ge 1/6$ and $x_{i-1}=y(t_{i-1})>0$, then $x_{i}=y(t_i)$. Recall that $x_i$ is a global minimizer for the functional 
$$x\in \real \mapsto F_{i}(x):=\E(t_{i},x)+|x-x_{i-1}| = x^2-x^4+0.3\,x^6+t_{i}(1-x^2)-x+|x-x_{i-1}|+6.$$
By using the triangle inequality $-x+|x-x_{i-1}|\ge -x_{i-1}$ and the same analysis of $F$, we can conclude that $x_i=y(t_i)$. 

Taking the limit as $\tau \to 0$, we obtain the energetic solution 
\bqq
x(t)=0 ~\text{ if }~ t \in [0,1/6), \; x(1/6)\in \{0, \sqrt{5/3}\}, \; x(t) = y(t) ~\text{ if }  t\in [1/6,2].
\eqq
{\bf Step 4.} Finally, we show that the energetic solution does not satisfy the new energy-dissipation balance. It is sufficient to show that at the jump point $t=1/6$,
\bqq
\E(t,x(t^+)) - \E(t,x(t^-))> - \Delta_{new}(t,x(t^-),x(t^+)).
\eqq
Indeed, a direct computation gives us that at $t=1/6$,
\bqq
\E(t,x(t^+)) - \E(t,x(t^-)) = \E(1/6, \sqrt{5/3}) - \E(1/6, 0) = -\sqrt{5/3}.
\eqq
On the other hand, at $t=1/6$, we have
\bqq
\Delta_{new}(t,x(t^-),x(t^+))
= \int_0^{{\sqrt{15}}/{3}} \max \left \{1,\left|\frac{2}{3}y-4y^3+1.8y^5-1 \right| \right\} dy = \frac{185}{486}+\sqrt{\frac{5}{3}}.
\eqq
Thus, 
$$\E(t,x(t^+)) - \E(t,x(t^-))> - \Delta_{new}(t,x(t^-),x(t^+))~~{\rm at}~t=1/6.$$
\subsubsection*{\bf Part II. BV solution constructed using the viscous dissipation $\Psi_{\eps}(x)=|x|+\eps x^2$.}

We construct the BV solution using the vanishing viscosity method with the viscous term $\eps x^2$ by means of the method used in \cite{MieRosSav-12}. 

Let us briefly recall the construction of the BV solution. Given $\eps>0$ and $\tau>0$. We denote by $e:=\eps/\tau$. Let $0=t_0 < \dots < t_N \le T$ be a partition of $[0,T]$ satisfying $t_i-t_{i-1}=\tau$ for every $i \in \{1, \dots, N\}$ and $T-t_N < \tau$. The discretized problem is to find a sequence $\{x^{\eps,\tau}\}_{i=1}^N$ such that $x^{\eps,\tau}_0=0$ and $x^{\eps,\tau}_{i}$ is a global minimizer for the functional
$$x \in \real \mapsto \{\E(t_i,x)+|x-x_{i-1}^{\tau,\eps}|+e|x-x_{n-1}^{\tau,\eps}|^2\}$$
for every $i=1,2,...,N$ and $e=\eps/\tau.$ Then using interpolation and passing to the pointwise limit as $\tau\to 0,\eps\to 0$ and $e=\eps/\tau\to \infty$, we obtain the BV solution. 

Now coming back to our example, for $t\in (0,2]$, we consider the function
\bqq
F(x):=\E(t,x) + |x| + e|x|^2 = t + (1+e-t)\,x^2-x^4+0.3\,x^6-x+|x|+6,~~x\in \real.
\eqq
If $e$ is large enough (such that $1+e-t\ge 1$), one has
\bqq
F(x) \ge  t + x^2-x^4+0.3\,x^6 +6= t+ \frac{1}{6}x^2 + \left( \sqrt{\frac{5}{6}}x - \sqrt{\frac{3}{10}}x^3\right)^2 +6\ge t+6 = F(0).
\eqq
Thus $F$ has a unique global minimizer at $x=0$. Therefore, the discretized sequence $\{x^{\tau,\eps}_i\}$ is identically equal to $0$ and so is the BV solution. 
\subsubsection* {\bf Part III. BV solution constructed using the epsilon-neighborhood method.}

\text{}\\
{\bf Step 1.} Let $\eps>0$ and $\tau>0$ be small. Let us compute $x_i:=x^{\eps,\tau}(t_i)$, where $t_i=i/N$ for $i=0,1,\dots,N$. Here $N\in \mathbb{N}$ with $1\in [\tau N, \tau (N+1))$. 

By definition, $x_0=0$ and $x_i$ is a minimizer for the functional
$$ F_i(x):=\E(t_i,x)+|x-x_{i-1}|=x^2-x^4+0.3\,x^6+t_i(1-x^2)-x+|x-x_{i-1}|+6$$
over $x\in [x_{i-1}-\eps, x_{i-1}+\eps]$.
In particular, if $x_{i-1}=0$, then $x_i$ is a minimizer for
$$ \tilde{F}_i(x):=x^2-x^4+0.3\,x^6+t_i(1-x^2)-x+|x|+6$$
over $x\in [-\eps, \eps]$. 

Recall that, if $t_i<1$, $\tilde{F}_i(x)$ has two local minimizers at $x=0$ and
$$x=y(t)=\frac{\sqrt{10 + \sqrt{10+90t_i}}}{3}>1.$$
Choose $\eps<1$, then $x=0$ is the unique minimizer for $\tilde{F}_i(x)$ on $x\in [-\eps, \eps]$. Thus, we can conclude that $x_i=0$ whenever $t_i<1$.
\text{}\\\\
{\bf Step 2.} Assume that $t_i\in [1,2]$. We prove that $x_i \le y(t_i)$ for all $i$ by contradiction. Indeed, by means of induction we can assume that $x_{i-1}\le y(t_{i-1})$. Suppose that $x_i > y(t_i)$. Since $x_{i-1}\le y(t_{i-1})<y(t_i)<x_i\le x_{i-1}+\eps$, there exists an $a\in (y(t_i),x_i) \subset [x_{i-1}-\eps,x_{i-1}+\eps]$. Using the fact that the function $x \mapsto g_i(x)=x^2-x^4+0.3x^6+t_i(1-x^2)+6$ is strictly increasing in the interval $[y(t_i),\infty)$ and the triangle inequality $f(x)=-x+|x-x_{i-1}|\ge -x_{i-1}$, we have
\[ F_i(x_i)= x_i^2 -x_i^4 + 0.3x_i^6 + t_i(1-x_i^2)-x_i+|x_i-x_{i-1}| +6>a^2-a^4+0.3a^6+t_i(1-a^2)-x_{i-1} +6 =F_i(a).\]
This contradicts to the assumption that $x_i$ is a minimizer for $F_i(x)$ over $x\in[x_{i-1}-\eps,x_{i-1}+\eps]$. Thus, we must have $x_i \le y(t_i)$.

Moreover, if we choose $\eps <\frac{1}{2}$, it holds that $x_i \ge x_{i-1}$. Indeed, since $g_i(x)$ decreases in $[-\frac{1}{2},y(t_i))$ and $f(x)$ strictly decreases when $x<x_{i-1}$, for every $z \in [-\frac{1}{2},x_{i-1})$
$$F_i(z)=g_i(z)+f(z) > g_i(x_{i-1})+f(x_{i-1})=F_i(x_{i-1}).$$

For the determination of $x_i$, we have the following cases:
\begin{itemize}
\item $x_{i-1}\in[0,y(t_i)-\eps)$. Observe that $y(t)$ strictly increases in $t$. We can choose $\tau$ small enough (in this case $\tau \le \eps$) so that $y(t_{i})-y(t_{i-1})<\eps$.  Thus, $x_{i-1} < y(t_{i-1})$. Since $f(x)=x_{i-1}$ for $x\ge x_{i-1}$ and $g_i(x)$ decreases in the interval $[x_{i-1},y(t_i))$, the function $F_i(x)=g_i(x)+f(x)$ decreases in the interval $[x_{i-1},y(t_i))$. Thus, $x_i =x_{i-1}+\eps$. 


\item For the case when $x_{i-1}\in[y(t_i)-\eps,y(t_{i-1})]$, $y(t_i)$ is the unique minimizer of $F_i(x)$ in the interval $[x_{i-1},x_{i-1}+\eps]$. Thus, $x_i=y(t_i)$.
\end{itemize}
\text{}\\
{\bf Step 3.} Taking the largest $k$ and the smallest $m$ such that $x_k=0$ and $x_m=y(t_m)$. The number of steps $L$ to move from $x_k$ to $x_m$ is the integer part of $\frac{y(t_m)}{\eps}$. Since $\eps$ is fixed, this value is bounded from above by a constant $C=\left[\frac{5}{\eps}\right]+1$. Hence,
\[t_m = t_k + L\tau \le t_k + C\tau.\]

Taking $\tau \to 0$, we have $t_m \approx t_k \approx 1$. Thus, for $\eps <\frac{1}{2}$, the BV solution constructed using the epsilon-neighborhood method is $x(t)=x^{\eps}(t)=0$ if $t \in[0,1)$ and $x(t)=x^{\eps}(t)=y(t)$ if $t\in (1,2]$. At $t=1$, $x(t)$ can take values either $0$ or $y(1)$.

\text{}\\
{\bf Step 4.} We show that the BV solution constructed by epsilon-neighborhood does not satisfy the energy-dissipation balance. At the jump point $t=1$, one has
$$ -|x(t^-)-x(t^+)|=-\frac{2\sqrt{5}}{3}> \E(t,x(t^+)) - \E(t,x(t^-))=-\frac{400}{243}-\frac{\sqrt{20}}{3}.$$

\subsubsection* {\bf Part IV. The solution constructed by the method in \cite{EfeMie-06}.}
Let us briefly recall the method used in \cite{EfeMie-06}. Let $N\in \mathbb{N}$ be the numbers of time step. The neighborhood is chosen equal to the usual time-step, i.e. $\eps = \tau=\frac{T}{N}$. Take $t_0=0$ and $x_0=0$. For $j=1,\dots,N$, $x_j$ and $t_j$ are defined as follows.
\begin{itemize}
\item $x_j$ minimizes $\{\E(t_{j-1},x)+|x-x_{j-1}|\}$ among all states $x$ such that $|x-x_{j-1}|\le \eps$.
\item $t_j=t_{j-1}+\tau-|x_j-x_{j-1}|$.
\end{itemize}
By the same argument as in Part III, Step 1, we deduce that $x_{i+1}=0$ and $t_{i+1}=t_i+\tau$ if $N<T$ and $t_i<1$.

Now assume that $t_{i}\in [1,2]$. Arguing as in Part III, Step 2, we have $x_{i+1}\in [x_i,y(t_i)]$ and
\begin{itemize}
\item If $x_i \in[0,y(t_i)-\eps)$: $x_{i+1} =x_i+\eps$ and $t_{i+1}=t_i$.
\item If $x_i\in [y(t_i)-\eps,y(t_{i-1})]$: $x_{i+1}=y(t_i)$ and $t_{i+1}=t_i+\tau-|y(t_i)-x_i|$.
\end{itemize}
Taking $\tau$ to $0$, we obtain the solution $x(t)=0$ if $t<1$, $x(t)=y(t)$ if $t\ge 1$.
\text{}\\
\subsection{Proof of the energy estimate in Lemma \ref{le:dis-eps-sol}}
{\bf Step 1.} By the minimality of $x_n^{\eps,\tau}$ at time $t_n$, we have
\bqq \E(t_n,x_n^{\eps,\tau})+\Psi(x_{n}^{\eps,\tau}-x_{n-1}^{\eps,\tau}) \le  \E(t_n,x_{n-1}^{\eps,\tau}) = \E(t_{n-1},x_{n-1}^{\eps,\tau})+\int_{t_{n-1}}^{t_n}\d_t \E(t,x_{n-1}^{\eps,\tau})\,dt.
\eqq
It follows from the assumption (\ref{E2}) that
$$ 
\d_t \E(t,x_{n-1}^{\eps,\tau})\le \lambda \E(t_{n-1},x_{n-1}^{\eps,\tau})\, e^{\lambda(t-t_{n-1})} \fall t\in [t_{n-1},t_n].$$
Applying Gronwall's inequality we obtain
\bqq \E(t_n,x_n^{\eps,\tau})&\le & \E(t_n,x_n^{\eps,\tau})+\Psi(x_{n}^{\eps,\tau}-x_{n-1}^{\eps,\tau}) \hfill\\
& \le & \int_{t_{n-1}}^{t_{n}} \lambda \E(t_{n-1},x_{n-1}^{\eps,\tau})\, e^{\lambda (t-t_{n-1})}dt + \E(t_{n-1},x_{n-1}^{\eps,\tau}) \hfill\\
&=& \E(t_{n-1},x_{n-1}^{\eps,\tau})(e^{\lambda (t_n-t_{n-1})}-1)+\E(t_{n-1},x_{n-1}^{\eps,\tau})=\E(t_{n-1},x_{n-1}^{\eps,\tau})\, e^{\lambda (t_n-t_{n-1})}.
\eqq
By induction,
\bqq \E(t_n,x_n^{\eps,\tau}) &\le & \E(t_{n-1},x_{n-1}^{\eps,\tau}) \, e^{\lambda (t_n-t_{n-1})} \le \E(t_{n-2},x_{n-2}^{\eps,\tau}) \, e^{\lambda (t_{n-1}-t_{n-2})}\, e^{\lambda (t_n-t_{n-1})}\hfill\\
&\le & \dots \le \E(0,x_{0})\, e^{\lambda (t_{1}-t_{0})}\, e^{\lambda (t_{2}-t_{1})} \dots e^{\lambda (t_n-t_{n-1})} = \E(0,x_{0})\, e^{\lambda t_n}.
\eqq
Finally, by (\ref{E2}) again,
$$ \E(0,x_n^{\eps,\tau})\le \E(t_n,x_n^{\eps,\tau})\, e^{\lambda t_n}\le \E(0,x_0)\, e^{2\lambda t_n}.$$
\text{}\\
{\bf Step 2.} Now we prove the integral bound. Assume that $t_{i-1}<s\le t_i <t_{i+1}< \dots <t_j \le t<t_{j+1}$, where $\{t_n\}$ is the partition corresponding to $x^{\eps,\tau}$. We start by writing 
\bq \label{chapter2_eq0}
&~& \E(t,x^{\eps,\tau}(t))-\E(s,x^{\eps,\tau}(s))= \E(t,x^{\eps,\tau}(t))-\E(t_j, x^{\eps,\tau}(t_j))+ \dots  \hfill\\
&~&\,\,\,\,\,\,\,\,\,\,\,\,\,\,\,\,\,\,\,\,\,\,\,\,\,\,\,\,\,\,\,\,\,\,\,+\E(t_j, x^{\eps,\tau}(t_j))-\E(t_{j-1},x^{\eps,\tau}(t_{j-1}))+ \E(t_i, x^{\eps,\tau}(t_i)) -\E(s,x^{\eps,\tau}(s)).\nonumber
\eq
By the minimality of $x_k:=x^{\eps,\tau}(t_k)$ at time $t_k$, we have
\bqq
 \E(t_k,x_k)-\E(t_{k-1},x_{k-1}) &\le & \E(t_{k},x_{k-1})-\Psi(x_{k}-x_{k-1})-\E(t_{k-1},x_{k-1}) = \int_{t_{k-1}}^{t_k} \d_t \E(r, x_{k-1}) \,dr -\Psi(x_{k}-x_{k-1}).
  \eqq
Taking the sum for all $k$ from $i+1$ to $j$ and using $x^{\eps,\tau}(r)=x_{k-1}$ for all $r \in [t_{k-1},t_k)$, we get
 \bq \label{chapter2_eq1}
 \sum_{k=i+1}^j \left[ \E(t_k,x_k)-\E(t_{k-1},x_{k-1})\right] \le \sum_{k=i+1}^j \int_{t_{k-1}}^{t_k} \d_t \E(r,x^{\eps,\tau}(r))\,dr - \sum_{k=i+1}^j \Psi(x_k - x_{k-1}). 
 \eq
Moreover, since $t_{i-1}<s\le t_i$ and $t_j \le t<t_{j+1}$, we can write
\bq 
\E(t_i,x^{\eps,\tau}(t_i))-\E(s,x^{\eps,\tau}(s))&=& \E(t_i,x_i)-\E(s,x_{i-1}) \nonumber \hfill\\
&\le & \E(t_i,x_{i-1})-\Psi(x_{i}-x_{i-1})-\E(s,x_{i-1}) \nonumber \hfill\\
&=& \int_s^{t_i} \d_t \E(r, x^{\eps,\tau}(r))\,dr -\Psi(x_i-x^{\eps,\tau}(s)).\label{chapter2_eq3} \hfill\\
\E(t,x^{\eps,\tau}(t))-\E(t_j,x^{\eps,\tau}(t_j))&=& \E(t,x_j)-\E(t_j,x_{j}) = \int_{t_j}^t \d_t \E(r,x^{\eps,\tau}(r))\,dr-\Psi(x^{\eps,\tau} (t)-x_j) ,\label{chapter2_eq2} 
\eq
Thus, it follows from (\ref{chapter2_eq0}), (\ref{chapter2_eq1}), (\ref{chapter2_eq2}) and (\ref{chapter2_eq3}) that
\bqq
\E(t,x^{\eps,\tau}(t))-\E(s,x^{\eps,\tau}(s)) &\le& \int_s^t \d_t \E(r, x^{\eps,\tau}(r))\,dr- \left(\Psi(x^{\eps,\tau}(t)-x_j) + \sum_{k=i+1}^j \Psi(x_k-x_{k-1}) + \Psi(x_i-x^{\eps,\tau}(s)) \right) \hfill\\
&=& \int_s^t \d_t \E(r, x^{\eps,\tau}(r))\,dr -\diss_{\Psi} (x^{\eps,\tau}; [s,t]).
\eqq

\subsection{Proof of Lemma \ref{2Lemma4}}
\begin{proof}
Applying the chain rule formula for $\E \in C^1$ and $u\in BV$ (see \cite{AmbFusPal-00}), we get
\begin{eqnarray*}
\E(t_1,u(t_1))-\E(t_0,u(t_0)) & = &\int_{t_0}^{t_1}\partial_t\E(s,u(s))\,ds+\int_{t_0}^{t_1} \left<\nabla_x  \E(s,u(s)), u'_{co}(s)\right> \,ds \hfill\\
& ~ &+\sum_{t\in J\cap(t_0,t_1)} \left[ \E(t,u(t))-\E(t,u(t^-))\right] +\sum_{t\in J\cap(t_0,t_1)}\left[ \E(t,u(t^+))-\E(t,u(t)) \right] \hfill\\
& ~ &+\E(t_0,u(t_0^+))-\E(t_0,u(t_0))+\E(t_1,u(t_1))-\E(t_1,u(t_1^-)).
\end{eqnarray*}
The fact that $-\nabla_x \E(t,u(t)) \in \partial \Psi(0)$ whenever $u(t)$ is continuous at $t$ yields
\begin{eqnarray}\label{M-xinh-dep-1}
\int_{t_0}^{t_1} \left<\nabla_x\E(s,u(s)),u'_{co}(s)\right>\,ds \ge -\int_{t_0}^{t_1} \Psi(u'_{co}(s))\,ds.
\end{eqnarray}
Note that
\begin{eqnarray} \label{2Eq5}
\int_{t_0}^{t_1} \Psi(u'_{co}(s))\,ds &=& \diss (u;[t_0,t_1]) - \sum_{t \in J \cap (t_0,t_1)} \Psi(u(t)-u(t^-)) - \sum_{t \in J \cap (t_0,t_1)} \Psi(u(t^+)-u(t)) \nn \hfill\\
&~& - \Psi(u(t_0^+)-u(t_0))-\Psi(u(t_1)-u(t_1^-)).
\end{eqnarray}
Moreover, for every absolutely continuous curve $v$ in $AC([0,1];\real^d)$ such that $v(0)=u(t^-)$, $v(1)=u(t)$ we have 
\begin{eqnarray*}
|\E(t,u(t))-\E(t,u(t^-))|&=& \left| \int_0^1\left<\nabla_x\E(t,v(s)),\dot{v}(s)\right>\,ds \right|.
\end{eqnarray*}
For any $\eta \in \partial \Psi(0)$, it holds that $\left<\eta , v \right> \le \Psi(v)$ for all $v\in \mathbb{R}^d$. Thus, at every point $s\in [0,1]$ for which the derivative $\dot{v}(s)$ exists, we can write
\[ -\Psi(\dot{v}(s)) \le \left<-\eta , \dot{v}(s)\right>.\]
Hence,
\begin{eqnarray*}
\left<-\nabla_x \E(t,v(s)), \dot{v}(s)\right> &=& \Psi(\dot{v}(s)) - \Psi(\dot{v}(s))+\left< - \nabla_x \E(t,v(s)),\dot{v}(s)\right> \hfill\\
&\le&\Psi(\dot{v}(s)) +\left<- \eta,\dot{v}(s)\right> +\left<-\nabla_x \E(t,v(s)), \dot{v}(s)\right>\hfill\\
&\le& \Psi(\dot{v}(s)) + \|-\eta -\nabla_x \E(t,v(s))\|_* \cdot \|\dot{v}(s)\|.
\end{eqnarray*}
The inequality above holds for every $\eta \in \partial \Psi(0)$. Thus, we obtain
\[\left<-\nabla_x\E(t,v(s)), \dot{v}(s)\right> \le \Psi(\dot{v}(s)) + \inf_{\eta \in \partial \Psi(0)} \|\eta + \nabla_x \E(t,v(s))\|_* \cdot \|\dot{v}(s)\|.\]
Therefore, for any absolutely continuous curve $v$ in $AC([0,1];\mathbb{R}^d)$ satisfying $v(0)=u(t^-), v(1)=u(t)$, it holds that
\begin{eqnarray*}
|\E(t,u(t))-\E(t,u(t^-))| \le \int_0^1 \Psi(\dot{v}(s)) + \inf_{\eta \in \partial \Psi(0)}\|\eta + \nabla_x \E(t,v(s))\|_* \cdot \|\dot{v}(s)\|.
\end{eqnarray*}
By the definition of $\Delta_{new}(t,u(t^-),u(t))$, we can conclude that
\begin{eqnarray} \label{2Eq6}
|\E(t,u(t)) - \E(t,u(t^-))| \le \Delta_{ new}(t,u(t^-),u(t)).
\end{eqnarray}
Similarly, we also get
\bq \label{eq:6Aug-3}
|\E(t,u(t^+))-\E(t,u(t))| \le \Delta_{new} (t,u(t),u(t^+)). 
\eq
Thus, it follows from (\ref{M-xinh-dep-1}), (\ref{2Eq5}), (\ref{2Eq6}) and (\ref{eq:6Aug-3}) that
\begin{eqnarray*}
\E(t_1,u(t_1))-\E(t_0,u(t_0))&\geq&\int_{t_0}^{t_1}\partial_t\E(s,u(s))\,ds-\diss(u;[t_0,t_1]) \hfill\\
& &+\sum_{t\in J\in(t_0,t_1)}\Psi(u(t^-)-u(t)) +\sum_{t\in J\in(t_0,t_1)} \Psi( u(t)-u(t^+)) \hfill\\
& &+\Psi(u(t_0)-u(t_0^+))+\Psi(u(t_1^-)-u(t_1)) \hfill\\
& &-\sum_{t\in J\cap(t_0,t_1)}\Delta_{ new}(t,u(t^-),u(t))-\sum_{t\in J\cap(t_0,t_1)}\Delta_{ new}(t,u(t),u(t^+)) \hfill\\
& &-\Delta_{new}(t_0,u(t_0),u(t_0^+)) - \Delta_{new}(t_1,u(t_1^-),u(t_1)) \hfill\\
&=&\int_{t_0}^{t_1}\partial_t\E(s,u(s))\,ds- \diss_{new}(u;[t_0,t_1]).
\end{eqnarray*}
\end{proof}


 \end{document}